\documentclass[11pt, letter]{article}

\usepackage[margin=1in]{geometry}
\usepackage{graphicx}
\usepackage{makecell}
\usepackage{booktabs}
\usepackage{multirow}
\usepackage{subcaption}

\usepackage{amsmath, amssymb, amsfonts}
\usepackage{amsthm}

\usepackage{xcolor}
\usepackage{setspace}
\usepackage{lmodern}
\usepackage{titling}
\usepackage{authblk}
\usepackage[authoryear,round]{natbib}

\usepackage[colorlinks=true,
            linkcolor=blue,
            citecolor=blue,
            urlcolor=blue]{hyperref}

\newtheorem{theorem}{Theorem}[section]
\newtheorem{proposition}[theorem]{Proposition}
\newtheorem{lemma}[theorem]{Lemma}

\title{Robust Optimization for Green Ammonia Production}

\author[1]{Karl Zhu}
\author[2]{Yassine Bohafid}
\author[2]{Omar Kadir}
\author[1]{Dimitris Bertsimas}
\affil[1]{Operations Research Center, Massachusetts Institute of Technology, Cambridge, MA, USA}
\affil[2]{OCP Group, Casablanca, Morocco}

\date{\today}
\begin{document}

\maketitle

\begin{abstract}
The central challenge in optimizing green ammonia systems is satisfying the minimum-load requirements of the Haber–Bosch (HB) process under renewable uncertainty. We develop a robust optimization framework consisting of a strategic capacity planning model and an operational flow model under solar and wind uncertainty. The strategic model is a mixed-integer optimization (MIO) problem with flexible HB operating modes, namely hot-idling and shutdowns. To address the resulting computational challenges, we propose a robust scenario-reduction framework that combines $k$-means clustering with robust optimization to generate adversarial renewable trajectories. For the operational model, we develop adaptive robust rolling-horizon formulations under forecast uncertainty. Computational results show that the proposed framework produces feasible capacity plans under out-of-sample simulation, whereas existing approaches based on constraint aggregation fail to satisfy HB minimum-load requirements. Adaptive policies achieve higher ammonia production than static robust policies for a given robustness level, but provide weaker protection against realizations outside the uncertainty set.
\end{abstract}
\noindent\rule{\textwidth}{0.4pt}

\normalsize
\section{Introduction}

Ammonia is the second most produced chemical in the world, with roughly 80\% used in fertilizers \citep{aziz_ammonia_2020}. More recently, it has also attracted attention as a medium for hydrogen transport and energy storage. However, conventional ammonia production is responsible for approximately 2\% of global CO$_2$ emissions \citep{liu_life_2020}, motivating growing interest in \textit{green ammonia} produced using near-100\% renewable electricity.

We are partnering with OCP Group in Morocco—one of the world's largest fertilizer producers—to support a US\$7 billion investment in a green ammonia facility designed to produce one million metric tons of ammonia annually using solar and wind power over a 25-year horizon \citep{atchison_ocp_2023}. Figure~\ref{fig:overview} provides an overview of the system. The central challenge in planning and operating green ammonia facilities is satisfying the operational requirements of the Haber–Bosch (HB) process under renewable intermittency and uncertainty. In particular, HB plants are subject to minimum-load requirements and incur significant costs when frequently shut down and restarted. Capacity planning decisions must therefore remain feasible during prolonged periods of low renewable output, while operational flow decisions must be made under forecast uncertainty.

To model renewable uncertainty, we obtain 25 years of hourly solar and wind capacity factor derived from satellite and weather reanalysis data \citep{european_commission_photovoltaic_2024, setchell_ecmwf_2020}. However, solving a realistic green ammonia planning model directly over a multi-decade hourly dataset is computationally prohibitive when flexible HB operating modes, including hot-idling and shutdowns, are modeled through binary variables.

To address these challenges, we develop a robust optimization framework consisting of a strategic capacity planning model and a rolling-horizon operational model. The strategic model is a mixed-integer optimization (MIO) problem that determines the capacities of solar panels, wind turbines, battery energy storage systems, desalination, electrolyzers, hydrogen storage tanks, and the HB plant to minimize the levelized cost of ammonia (LCOA). To improve computational tractability, we strengthen the formulation through warm-starts and bilinear cuts. We further develop a robust scenario-reduction framework that combines $k$-means clustering with robust optimization to construct worst-case renewable trajectories for capacity planning.

In the operational model, we optimize hourly flows under fixed first-stage capacities in a rolling-horizon setting. We formulate this problem as an adaptive robust optimization model in which flow decisions respond to realized solar and wind uncertainty through affine decision rules. We compare nominal, static robust, and adaptive robust operating policies under forecast uncertainty to quantify the benefits and limitations of adaptive operation.

\begin{figure}[tbp]
\centering
\includegraphics[width=\linewidth]{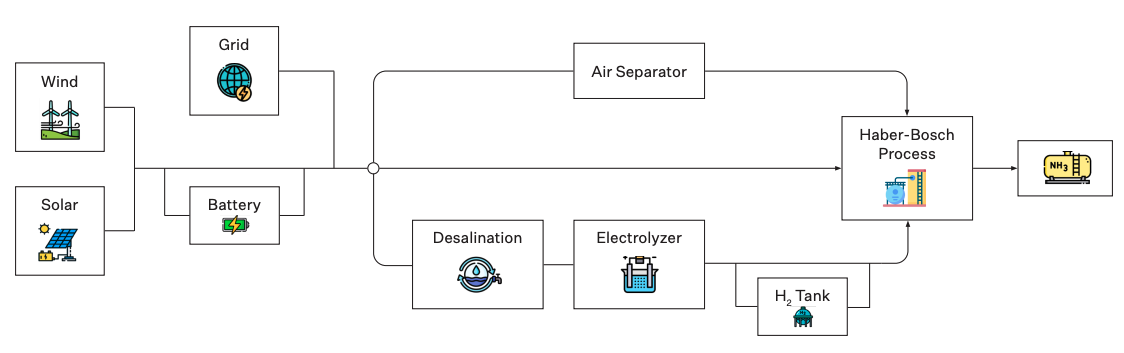}
\caption{Overview of the green ammonia plant. Ammonia (NH$_3$) is produced in a Haber--Bosch (HB) plant using energy, hydrogen (H$_2$), and nitrogen (N$_2$). H$_2$ is produced by splitting desalinated water via electrolysis, and N$_2$ by the air separation unit. All processes are powered by solar panels and wind turbines, with an external grid connection for backup. Electricity and H$_2$ can be stored in batteries and H$_2$ storage tanks, respectively.}
\label{fig:overview}
\end{figure}

\subsection{Related Work}\label{subsec:related-work}

\textbf{Green ammonia optimization.}
There is a growing interest in capacity planning and operation of green power-to-ammonia systems; see \citet{narciso_design_2025} for a recent review. Our framework is most closely related to \citet{salmon_impact_2023}, who develop a capacity planning model minimizing LCOA together with a rolling-horizon operational model. However, their capacity planning model assumes perfect foresight over a single historical year and accounts for renewable uncertainty only indirectly through inflated HB minimum-load requirements. They also do not consider hot-idling and shutdowns. Other recent works incorporate operational flexibility and uncertainty through stochastic or robust optimization. \citet{wang_optimal_2025} develops a two-stage stochastic MIO with HB shutdown decisions, while \citet{yu_optimal_2023} formulate a robust mixed-integer linear-fractional model with box uncertainty sets. However, these approaches rely on a limited number of representative scenarios or simple uncertainty sets and do not address capacity planning using multi-decade, high-resolution renewable datasets. \citet{palys_using_2020} study hydrogen and ammonia-based renewable energy storage using a mixed-integer capacity-planning and scheduling model with temporally clustered representative operating periods.

\textbf{Scenario reduction via clustering.}
Scenario reduction through clustering is widely used in energy-system planning to compress long renewable-generation time series into a tractable set of representative periods \citep{teichgraeber_time-series_2022}. A well-known limitation is that cluster centroids smooth temporal variability and suppress prolonged low-generation events, leading to underestimated capacity and storage requirements \citep{teichgraeber_clustering_2019}. To address this issue, recent work has combined clustering with robust optimization. \citet{bertsimas_decarbonizing_2023} apply robust optimization to $k$-means clustered renewable scenarios in long-term energy capacity expansion, using constraint aggregation to avoid the conservatism of constraint-wise robustness. 
While effective in that setting, constraint aggregation is less suitable for green ammonia systems, where feasibility is highly sensitive to hourly renewable availability due to the HB minimum-load constraints.

\subsection{Contributions and Structure}

Our contributions are threefold.

\begin{enumerate}

\item We formulate a mixed-integer capacity-planning model with flexible HB operation, including hot-idling and shutdown decisions, and strengthen the formulation using warm-starts and bilinear cuts to solve large-scale instances.

\item We develop a robust scenario-reduction framework that combines multi-decade renewable data, $k$-means clustering, and robust optimization to construct representative worst-case renewable trajectories. Unlike centroid-based methods and the constraint-aggregation approach of \citet{bertsimas_decarbonizing_2023}, the resulting scenarios preserve prolonged low-renewable periods that drive HB  minimium-load feasibility and capacity requirements.

\item We develop a rolling-horizon adaptive robust operational model and compare it against nominal and static robust policies. Adaptive policies achieve higher production at comparable robustness levels but provide weaker protection against renewable realizations outside the uncertainty set.

\end{enumerate}

The remainder of the paper is organized as follows. Section~\ref{section:strategic} introduces the strategic capacity planning model. Section~\ref{section:scenario-reduction} presents the robust scenario-reduction framework. Section~\ref{section:operational} develops the operational model under forecast uncertainty. Section~\ref{section:exp-results} reports computational experiments and results. Section~\ref{section:conclusion} concludes.

\section{Strategic Model}\label{section:strategic}

This section presents the strategic capacity planning model. We begin with the standard perfect-foresight formulation, in which uncertain renewable capacity factors are replaced by their historical realizations. We first develop a linear optimization (LO) model, then extend it to a mixed-integer linear optimization (MILO) model by incorporating HB hot-idling and shutdown decisions.

The 25-year historical dataset is split sequentially into 15 years for capacity planning and optimization and 10 years for out-of-sample evaluation. The model is formulated at hourly resolution over the 15-year planning horizon. For simplicity, leap days are omitted, so hours $h \in [24]$, days $d \in [365]$, and years $y \in [15]$.

\subsection{Sets and Parameters}
The sets are defined below. Table~\ref{tab:parameters} describes the parameters, and their values are in Appendix~\ref{app:strategic}.
\begin{description}
    \item[Components] $\mathcal{K} = \{s, w, b, d, e, H_2, HB\}$ denotes the set of infrastructure components subject to capacity planning. These correspond to solar panels ($s$), wind turbines ($w$), batteries ($b$), desalination plant ($d$), electrolyzer ($e$), hydrogen storage tanks ($H_2$), and HB plants ($HB$). The air separation unit is assumed to have sufficient capacity and is therefore excluded from planning.
    \item[Electricity Flows] $\mathcal{I} = \{ 
HB,\; d,\; e,\; a,\; H_2^{\text{comp}}, b^+,\; b^-,\;b^-_e,\; b^-_{HB},\; b^-_d,\; b^-_a,\; b_{H_2^{\text{comp}}},\; b\text{:store}
\}$ denotes the set of electricity flows, including: electricity to the HB plant ($HB$), desalination ($d$), electrolyzer ($e$), air separation ($a$), hydrogen storage compression ($H_2^{\text{comp}}$), battery charging ($b^+$), battery discharging to electrolyzer ($b^-_e$), to HB ($b^-_{HB}$), to desalination ($b^-_d$), to air separation ($b^-_a$), to hydrogen compression ($b^-_{H_2^\text{comp}}$), and battery storage ($b\text{:store}$). We also define $\mathcal{I}_\mathrm{out} = \{a,b^+,d,e,H_2^{\text{comp}},HB\}\subset \mathcal{I}$ as the set of outflows from the renewables. 

    \item[Mass Flows] $\mathcal{J} = \{
HB{:}H_2,\; H_2,\; H_2^+,\; H_2^-,\; H_2{:}\text{store},\; NH_3,\; N_2,\; H_2O
\}$ denotes the set of mass flows, including: total hydrogen input to HB ($HB{:}H_2$), hydrogen produced by the electrolyzer ($H_2$), hydrogen tank inflow/outflow ($H_2^+$, $H_2^-$), hydrogen storage level ($H_2{:}\text{store}$), ammonia produced by HB ($NH_3$), nitrogen produced by the air separator ($N_2$), and water produced by desalination ($H_2O$).
\end{description}

\begin{table}[tbp]
\centering
\caption{Model Parameters and Definitions}
\begin{tabular}{ll}
\toprule
\textbf{Symbol} & \textbf{Description} \\
\midrule
$C_k$ & Unit capital cost of component $k \in \mathcal{K}$ \\
$c_k$ & Marginal operational cost of component $k \in \mathcal{K}$ \\
$\dot{v}_s^{h,d,y},\ \dot{v}_w^{h,d,y}$ & Hourly solar and wind capacity factors at time $(h,d,y)$ \\
$\eta_\text{RTE}$ & Battery round-trip efficiency \\
$\eta_\text{heat}$ & Battery heat-loss efficiency \\
$\Lambda_b,\ \Lambda_{H_2}$ & Storage flow limits for battery and hydrogen tanks \\
$\delta_k$ & Yearly degradation factor for $k \in \{b, e\} \subset \mathcal{K}$ \\
$\phi^e_j$ & Tons of material produced by one MWh, for material $j \in \{H_2^\text{comp}, H_2, N_2, H_2O, NH_3\}$ \\
$\phi^m_{j,j'}$ & Mass ratio of $j$ per unit mass of $j'$; for $(j,j') \in \{(H_2, H_2O), (NH_3, N_2), (NH_3, H_2)\}$ \\
\bottomrule
\end{tabular}
\label{tab:parameters}
\end{table}

\subsection{Decision Variables}
For capacity decisions, we have $\boldsymbol{K} =(K_k)_{k \in \mathcal{K}}$, measured in MW (solar, wind, electrolyzers), MWh (battery storage), m$^3$/year (desalination), tons (hydrogen storage), and tons of NH$_3$/year (HB plant). For hourly flow decisions, we have electricity $\boldsymbol{x} =(x_i^{h,d,y})_{i\in\mathcal{I}, (h,d,y)}$ in MWh, and mass $\boldsymbol{w} =(w_j^{h,d,y})_{j\in\mathcal{J}, (h,d,y)}$ in tons. 
\subsection{Objective}\label{section:objective}

We minimize the levelized cost of ammonia (LCOA). All capacity investments are assumed to occur at year $y=0$. Let $\boldsymbol{C}=(C_k)_{k \in \mathcal{K}}$, and $(C_{\text{trans}},C_{\text{port}},C_{\text{dev}},C_{\text{cont}})$ denote the capital cost coefficients, fixed transportation and port costs, and development and contingency factors, respectively. The capital expenditure is
\(
\text{CAPEX}
=
\big(
\boldsymbol{C}^\top\boldsymbol{K}
+
C_{\text{trans}}
+
C_{\text{port}}
\big)
(1+C_{\text{dev}})
(1+C_{\text{cont}}).
\)
Similarly, let $\boldsymbol{c}=(c_k)_{k \in \mathcal{K}}$ denote the annual operating cost coefficients, with fixed costs $(c_{\text{trans}},c_{\text{port}})$ and factors $(c_{\text{dev}},c_{\text{cont}})$. The operating expenditure in year $y$ is \(\text{OPEX}_y
=
\big(
\boldsymbol{c}^\top\boldsymbol{K}
+
c_{\text{trans}}
+
c_{\text{port}}
\big)
(1+c_{\text{dev}})
(1+c_{\text{cont}}). \) Given a discount rate $\gamma$ and annual ammonia production $\kappa_y$, the LCOA is
\begin{equation}
\label{eq:lcoa}
\text{LCOA}
=
\frac{
\text{CAPEX}
+
\sum_y \frac{\text{OPEX}_y}{(1+\gamma)^y}
}{
\sum_y \frac{\kappa_y}{(1+\gamma)^y}
}.
\end{equation}
Under OCP's production contracts, annual production is fixed at $\kappa_y=1$ MT/year for all $y$. The resulting LCOA \eqref{eq:lcoa} is linear function with respect to the decision variables.

\subsection{Perfect Foresight LO Formulation}\label{section:lp-formulation}
\begin{subequations}
\small
\begin{align}
\min_{\boldsymbol{K,x,w}} &\quad \text{LCOA},&~\eqref{eq:lcoa} \nonumber \\
\text{s.t.} \quad & \ \dot{v}_s^{h,d,y} K_s + \dot{v}_w^{h,d,y} K_w \ge \sum_{i \in \mathcal{I}_\text{out}} x_{i}^{h,d,y}, & \forall(h,d,y), \label{eq:lp-model-energy-balance}  \\
& x_{b:\text{store}}^{1,1,1} = S_{b,\text{init}} K_b, & \label{eq:lp-model-init-battery} \\
& w_{H_2:\text{store}}^{1,1,1} = S_{H_2,\text{init}} K_{H_2}, & \label{eq:lp-model-init-h2} \\
& x_{b:\text{store}}^{\tau} \le x_{b:\text{store}}^{\tau^-} + \sqrt{\eta_\text{RTE}} x_{b^+}^{\tau}  - \frac{1}{\sqrt{\eta_\text{RTE}}} x_{b^-}^{\tau}, & \forall \tau\in\mathcal{T}\setminus\{(1,1,1)\} \neq (1,1,1), & \label{eq:lp-model-battery-update} \\
& x_{b^-_e}^{h,d,y} + x_{b^-_{HB}}^{h,d,y} + x_{b^-_d}^{h,d,y} + x_{b^-_a}^{h,d,y} + x_{b^-_{H_2^\text{comp}}}^{h,d,y} \le \eta_\text{heat}  x_{b^-}^{h,d,y}, & \forall (h,d,y), \label{eq:lp-model-battery-efficiency} \\
& w_{H_2:\text{store}}^{\tau} \le w_{H_2:\text{store}}^{\tau^-} + w_{H_2^+}^{\tau} - w_{H_2^-}^{\tau}, & \forall \tau\in\mathcal{T}\setminus\{(1,1,1)\}, \label{eq:lp-model-h2-update} \\
& x_{b:\text{store}}^{h,d,y} \le \delta_b^{y-1} S_{b,\text{max}} K_b, & \forall (h,d,y), \label{eq:lp-model-battery-cap} \\
& w_{H_2:\text{store}}^{h,d,y} \le S_{H_2,\text{max}} K_{H_2}, & \forall (h,d,y), \label{eq:lp-model-h2-cap} \\
& x_{b^-}^{h,d,y} \le \delta^{y-1}_b \Lambda_b K_b, & \forall (h,d,y), \label{eq:lp-model-battery-discharge-limit} \\
& w_{H_2^+}^{h,d,y}, w_{H_2^-}^{h,d,y} \le \Lambda_{H_2}, & \forall (h,d,y), \label{eq:lp-model-h2-flowlimit} \\
& w_{HB:H_2}^{h,d,y} \le w_{H_2}^{h,d,y} + w_{H_2^-}^{h,d,y} - w_{H_2^+}^{h,d,y}, & \forall (h,d,y), \label{eq:lp-model-h2-balance-hb} \\
& w_{H_2^-}^{h,d,y} \le \phi_{H_2}^e (x_{H_2^\text{comp}}^{h,d,y} + x_{b^-_{H_2^\text{comp}}}^{h,d,y}), & \forall (h,d,y), \label{eq:lp-model-h2-compression} \\
& w_{H_2}^{h,d,y} \le \phi_{H_2}^e (x_e^{h,d,y} + x_{b^-_e}^{h,d,y}), & \forall (h,d,y), \label{eq:lp-model-h2-from-electricity} \\
& w_{H_2}^{h,d,y} \le \phi_{H_2,H_2O}^m w_{H_2O}^{h,d,y}, & \forall (h,d,y), \label{eq:lp-model-h2-from-water} \\
& w_{N_2}^{h,d,y} \le \phi_{N_2}^e (x_a^{h,d,y} + x_{b^-_a}^{h,d,y}), & \forall (h,d,y), \label{eq:lp-model-n2} \\
& w_{H_2O}^{h,d,y} \le \phi_{H_2O}^e (x_d^{h,d,y} + x_{b^-_d}^{h,d,y}), & \forall (h,d,y), \label{eq:lp-model-h2o} \\
& w_{NH_3}^{h,d,y} \le \phi^e_{NH_3} (x_{HB}^{h,d,y} + x_{b^-_{HB}}^{h,d,y}), & \forall (h,d,y), \label{eq:lp-model-nh3-from-electricity} \\
& w_{NH_3}^{h,d,y} \le \phi_{NH_3,N_2}^m w_{N_2}^{h,d,y}, & \forall (h,d,y), \label{eq:lp-model-nh3-from-n2} \\
& w_{NH_3}^{h,d,y} \le \phi_{NH_3,H_2}^m w_{HB:H_2}^{h,d,y}, & \forall (h,d,y), \label{eq:lp-model-nh3-from-h2} \\
& x_e^{h,d,y} + x_{b^-_e}^{h,d,y} \le \delta_e^{y-1} K_e, & \forall (h,d,y), \label{eq:lp-model-electrolyzer-cap} \\
& w_{H_2O}^{h,d,y} \le \frac{K_d}{8760}, & \forall (h,d,y), \label{eq:lp-model-h2o-cap} \\
& w_{NH_3}^{h,d,y} \le \frac{K_{HB}}{8760}, & \forall (h,d,y), \label{eq:lp-model-nh3-cap} \\
& w_{NH_3}^{h,d,y} \ge \frac{\lambda_{HB}^\text{min} K_{HB}}{8760}, & \forall (h,d,y), \label{eq:lp-model-min-load-nh3} \\
& x_{HB}^{h,d,y} + x_{b^-_{HB}}^{h,d,y} \ge \frac{\lambda_{HB}^{\min} K_{HB}}{8760  \phi^e_{NH_3}} + \lambda^e_{HB}K_{HB}, & \forall (h,d,y), \label{eq:lp-model-min-energy-hb} \\
& \sum_{d=1}^{365} \sum_{h=1}^{24} w_{NH_3}^{h,d,y} \ge \kappa_y, & \forall y, \label{eq:lp-model-annual-target} \\
& \boldsymbol{K,x,w} \ge \boldsymbol{0}. \label{eq:lp-model-nonneg}
\end{align}

\end{subequations}
The model minimizes LCOA \eqref{eq:lcoa} subject to
\eqref{eq:lp-model-energy-balance}--\eqref{eq:lp-model-nonneg}. Hourly energy
balance is enforced by \eqref{eq:lp-model-energy-balance}. Initial battery and
hydrogen storage levels are specified in
\eqref{eq:lp-model-init-battery}--\eqref{eq:lp-model-init-h2}. Storage dynamics
are governed by \eqref{eq:lp-model-battery-update}--\eqref{eq:lp-model-h2-update},
where $\mathcal{T}=[24]\times[365]\times[15]$ denotes the ordered set of
hourly time indices and $\tau^{-}$ denotes the immediately preceding hour of
$\tau\in\mathcal{T}\setminus\{(1,1,1)\}$, with rollover across day and year
boundaries. Storage capacity and flow-rate limits are imposed by
\eqref{eq:lp-model-battery-cap}--\eqref{eq:lp-model-h2-flowlimit}.
Constraint~\eqref{eq:lp-model-h2-balance-hb} defines hydrogen availability for
the HB process, and \eqref{eq:lp-model-h2-compression} accounts for
the energy required to compress hydrogen into storage. The conversion of
electricity and feedstocks into hydrogen, nitrogen, water, and ammonia is
modeled by \eqref{eq:lp-model-h2-from-electricity}--\eqref{eq:lp-model-nh3-from-h2},
while capacity limits are imposed by
\eqref{eq:lp-model-electrolyzer-cap}--\eqref{eq:lp-model-nh3-cap}. The minimum
HB load and energy requirements are enforced through
\eqref{eq:lp-model-min-load-nh3}--\eqref{eq:lp-model-min-energy-hb}. Finally,
\eqref{eq:lp-model-annual-target} enforces the annual ammonia production
target, and \eqref{eq:lp-model-nonneg} imposes nonnegativity on all capacity
and flow variables.\subsection{Hot-Idling and Shutdowns: MIO Extensions}\label{section:mip}

The minimum-load requirements
\eqref{eq:lp-model-min-load-nh3}--\eqref{eq:lp-model-min-energy-hb}
can substantially increase renewable generation and storage requirements. To improve operational flexibility, modern HB reactors may support \emph{hot-idling} and \emph{shutdowns}. In hot-idle mode, the reactor remains at operating temperature but produces no ammonia. Under normal operation, the plant must satisfy the minimum-load requirement $\lambda_{HB}^{\min}$ and the fixed energy consumption $\lambda_{HB}^{e}K_{HB}$; under hot-idling, ammonia production is suspended while the reactor continues to consume $\lambda_{HB}^{\mathrm{idle}}$ of full-load energy in addition to the fixed energy requirement. Shutdowns allow the reactor to be taken completely offline. We assume at most $N_{\mathrm{shutdowns}}$ shutdowns per year and an $L$-hour transition delay period for both shutdown and restart operations. These operating modes introduce binary decisions, extending the model to a MILO.

For the shutdown and restart logic, we re-index hours within each year by $t=24(d-1)+h$ and write $x_i^{t,y}$ and $w_j^{t,y}$ for the flow variables at the hour corresponding to $(h,d,y)$. For each $(t,y)$, define the binary variables
\[
\begin{aligned}
o^{t,y} &= 1 && \text{if the HB plant is hot-idle, and 0 otherwise},\\
p^{t,y} &= 1 && \text{if the HB plant is shut down, and 0 otherwise},\\
q^{t,y} &= 1 && \text{if the HB plant is hot-idle or shut down, and 0 otherwise},\\
r^{t,y} &= 1 && \text{if a restart is initiated, and 0 otherwise},\\
s^{t,y} &= 1 && \text{if a shutdown is initiated, and 0 otherwise}.
\end{aligned}
\]
Each year begins online, so we set $p^{0,y}=0$. We also use the convention that $s^{\tau,y}=r^{\tau,y}=0$ for all $\tau\le 0$.

\begin{subequations}\label{eq:hot_idle_shutdown_constraints}
\begin{align}
o^{t,y} + p^{t,y} &\le 1 && \forall (t,y) \label{eq:z_eta_sum} \\
q^{t,y} &\ge o^{t,y} && \forall (t,y) \label{eq:delta_ge_z} \\
q^{t,y} &\ge p^{t,y} && \forall (t,y) \label{eq:delta_ge_eta} \\
q^{t,y} &\le o^{t,y} + p^{t,y} && \forall (t,y) \label{eq:delta_le_sum} \\
q^{t,y} = 0 &\implies \frac{\lambda_{HB}^{\min} K_{HB}}{8760} \le w_{NH_3}^{t,y} \le \frac{K_{HB}}{8760} && \forall (t,y) \label{eq:delta_zero_prod_bounds} \\
q^{t,y} = 0 &\implies x_{HB}^{t,y} + x_{b^-_{HB}}^{t,y} \ge \frac{\lambda_{HB}^{\min}K_{HB}}{8760 \cdot \phi^e_{NH_3}} + \lambda_{HB}^e K_{HB} && \forall (t,y) \label{eq:delta_zero_power_bound} \\
o^{t,y} = 1 &\implies w_{NH_3}^{t,y} = 0 && \forall (t,y) \label{eq:hotidle_zero_prod} \\
o^{t,y} = 1 &\implies x_{HB}^{t,y} + x_{b^-_{HB}}^{t,y} \ge \frac{\lambda_{HB}^{\mathrm{idle}}K_{HB}}{8760 \cdot \phi^e_{NH_3}} + \lambda^e_{HB} K_{HB}
&& \forall (t,y) \label{eq:hotidle_energy_requirement} \\
p^{t,y} = 1 &\implies 
\begin{cases}
w_{NH_3}^{t,y} = 0,\\
x_{HB}^{t,y} = 0,\\
x_{b^-_{HB}}^{t,y} = 0
\end{cases} && \forall (t,y) \label{eq:shutdown_zero_all} \\
s^{t,y}+r^{t,y} &\le 1 && \forall (t,y) \label{eq:no_simultaneous_transition} \\
s^{t,y} &\le 1-p^{t,y} && \forall (t,y) \label{eq:shutdown_only_if_online} \\
r^{t,y} &\le p^{t,y} && \forall (t,y) \label{eq:restart_only_if_shutdown} \\
\sum_{l=0}^{L-1}\left(s^{t-l,y}+r^{t-l,y}\right) &\le 1 && \forall (t,y) \label{eq:no_overlapping_transitions} \\
p^{t,y} &= p^{t-1,y}+s^{t-L,y}-r^{t-L,y} && \forall (t,y) \label{eq:shutdown_state_transition} \\
s^{t,y}=r^{t,y} &=0 && \forall t>8760-L,\ y \label{eq:no_late_transitions} \\
\sum_{t=1}^{8760} s^{t,y} &\le N_\text{shutdowns} && \forall y \label{eq:max_shutdowns} \\
\boldsymbol{o,p,q,r,s} &\in \{0,1\}^{8760\times 15}. \label{eq:binary}
\end{align}
\end{subequations}

Constraints~\eqref{eq:z_eta_sum}--\eqref{eq:delta_le_sum} define the mutually exclusive operating modes: normal operation, hot-idling, and shutdown, encoded through the auxiliary variable $q^{t,y}$. The minimum-load and energy requirements during normal operation are enforced by \eqref{eq:delta_zero_prod_bounds}--\eqref{eq:delta_zero_power_bound}. Hot-idling is modeled by \eqref{eq:hotidle_zero_prod}--\eqref{eq:hotidle_energy_requirement}, in which ammonia production is suspended while energy is consumed to maintain operating temperature. Constraint~\eqref{eq:shutdown_zero_all} represents complete shutdown. Constraints~\eqref{eq:no_simultaneous_transition}--\eqref{eq:restart_only_if_shutdown} ensure that shutdowns are initiated only when the plant is online and restarts only when the plant is shut down. Constraint~\eqref{eq:no_overlapping_transitions} prevents overlapping shutdown and restart transitions. Constraint~\eqref{eq:shutdown_state_transition} updates the shutdown state after the $L$-hour transition period. Constraint~\eqref{eq:no_late_transitions} avoids boundary effects at the end of each year, and \eqref{eq:max_shutdowns} limits the number of shutdowns per year.

The original minimum-load constraints \eqref{eq:lp-model-min-load-nh3} and \eqref{eq:lp-model-min-energy-hb} are replaced by \eqref{eq:delta_zero_prod_bounds}--\eqref{eq:hotidle_energy_requirement}.
\subsection{Formulation Strengthening}

Introducing binary operating modes substantially increases the complexity of the strategic model. We employ two formulation-strengthening techniques to improve tractability.

\paragraph{Warm start.}
Setting all binary variables $\boldsymbol{o,p,q,r,s}$ to zero recovers the LO formulation in Section~\ref{section:lp-formulation}. Since the MILO extends the LO model by introducing optional hot-idling and shutdown operating modes, every feasible LO solution remains feasible for the MILO when the binary variables are fixed at zero. We therefore solve the LO model first and use the resulting solution to warm-start the MILO, immediately providing a valid upper bound.

\paragraph{Bilinear cuts.}
We introduce three bilinear cuts corresponding to constraints \eqref{eq:lp-model-nh3-cap}, \eqref{eq:lp-model-min-load-nh3}, and \eqref{eq:lp-model-min-energy-hb}. These cuts are logically equivalent to the associated indicator constraints and substantially strengthen the relaxation. For each $(t,y)$, we add
\begin{subequations}
\begin{align}
w_{NH_3}^{t,y}
&\le
(1-q^{t,y})
\frac{K_{HB}}{8760},
\tag{2v'}
\\
w_{NH_3}^{t,y}
&\ge
(1-q^{t,y})
\frac{\lambda_{HB}^{\min}K_{HB}}{8760},
\tag{2w'}
\\
x_{HB}^{t,y}
+
x_{b^-_{HB}}^{t,y}
&\ge
\frac{
\Bigl[
\lambda_{HB}^{\min}(1-p^{t,y})
+
\bigl(\lambda_{HB}^{\mathrm{idle}}-\lambda_{HB}^{\min}\bigr)
o^{t,y}
\Bigr]
K_{HB}
}{
8760\,\phi_{NH_3}^{e}
}
+
(1-p^{t,y})
\lambda_{HB}^{e}K_{HB}.
\tag{2x'}
\end{align}
\end{subequations}

Although these constraints transform the strategic model into a mixed-integer quadratically constrained optimization problem, they substantially strengthen the LO relaxation. Computational results in Section~\ref{exp:mip} demonstrate the resulting improvements in solution quality and tractability.
 \subsection{Limitations of the Perfect-Foresight Model}

The perfect-foresight formulation replaces uncertain renewable generation with historical solar and wind capacity factors. While this approach is standard in the green ammonia literature, it introduces look-ahead bias by assuming full knowledge of future renewable realizations. To partially mitigate this issue, we use 15 years of hourly renewable data for capacity planning rather than a single representative year, thereby exposing the model to a broader range of operating conditions.

However, incorporating flexible HB operation through hot-idling and shutdown decisions makes the resulting MILO computationally intractable at this scale. Consequently, some form of scenario reduction is required to obtain a tractable planning model.

\section{Scenario Reduction via Clustering and Robust Optimization}
\label{section:scenario-reduction}

We model renewable uncertainty directly through the solar and wind capacity factors rather than indirectly by heuristically inflating the HB minimum-load requirement $\lambda_{HB}^{\min}$. However, solving the strategic model over 15 years of hourly renewable data becomes computationally intractable once hot-idling and shutdown decisions are included. We therefore develop a scenario-reduction framework that compresses the renewable data while preserving the adverse generation patterns that drive capacity requirements.

The framework has two stages. First, we cluster daily renewable trajectories using $k$-means and select representative years whose cluster distribution matches the full training dataset. Second, because cluster centroids smooth prolonged low-generation events, we solve a robust optimization problem within each cluster to construct adversarial representative trajectories. These trajectories are then used in the strategic model to obtain tractable and conservative capacity decisions.

\subsection{Clustering and Representative Year Selection}

Each day is encoded as a 48-dimensional vector by concatenating its 24-hour solar and wind capacity factor profiles. We perform $k$-means clustering with $k = 20$ on the 15-year training dataset. We set $k=20$ as a trade-off between temporal coverage and computational tractability: increasing $k$ preserves more renewable variability but enlarges the resulting strategic MIO. For each cluster $c$ and hour $h$, the centroid $\overline{v}^{h,c}_{\texttt{source}}$ is defined as the mean capacity factor across all days assigned to that cluster. The set of days assigned to cluster $c \in [20]$ is
\[
\mathcal{D}_c := \left\{(d,y) \in [365] \times [15] : (d,y) \text{ is assigned to cluster } c \right\}.
\]

To further reduce the size of the planning problem, we select a subset of $n$ representative years whose average cluster distribution closely matches that of the full 15-year dataset. Let $N$ denote the total number of training years, $P \in [0,1]^k$ the empirical cluster distribution over all $N$ years, and $C \in [0,1]^{k \times N}$ the matrix whose column $y$ gives the empirical cluster distribution in year $y$. Let $z_y$ indicate whether year $y$ is selected, let $\Gamma \in \mathbb{R}_+^{k \times k}$ denote the transport matrix, and let $D \in \mathbb{R}_+^{k \times k}$ denote the Euclidean distance matrix between cluster centroids. We solve the MILO model:
\begin{subequations}\label{eq:wasserstein}
\begin{align}
\min_{\boldsymbol \Gamma, \boldsymbol{z}}  \quad & \sum_{i=1}^k \sum_{j=1}^k D_{ij} \, \Gamma_{ij}, \label{eq:wasserstein-objective} \\
\text{s.t.} \quad 
& \sum_{j=1}^k \Gamma_{ij} = P_i, \quad \forall i \in [k], \label{eq:wasserstein-source} \\
& \sum_{i=1}^k \Gamma_{ij} = \frac{1}{n} \sum_{y=1}^N C_{j,y} z_y, \quad \forall j \in [k], \label{eq:wasserstein-target} \\
& \sum_{y=1}^N z_y = n, \label{eq:wasserstein-cardinality} \\
& \Gamma_{ij} \ge 0, \quad \forall (i,j) \in [k]^2, \label{eq:wasserstein-nonnegativity} \\
& z_y \in \{0,1\}, \quad \forall y \in [N]. \label{eq:wasserstein-binary}
\end{align}
\end{subequations}
The objective \eqref{eq:wasserstein-objective} minimizes the 1-Wasserstein distance between the empirical cluster distribution of the selected years and that of the full dataset. Constraints \eqref{eq:wasserstein-source}--\eqref{eq:wasserstein-target} define the transport plan, \eqref{eq:wasserstein-cardinality} selects exactly $n$ years, and \eqref{eq:wasserstein-nonnegativity}--\eqref{eq:wasserstein-binary} impose flow nonnegativity and integrality.

After selecting the representative years, we denote the chosen subset by $\mathcal{Y}$. Since these years are not necessarily consecutive, storage dynamics are treated independently across selected years: the initial storage constraints \eqref{eq:lp-model-init-battery}--\eqref{eq:lp-model-init-h2} are enforced for each $y \in \mathcal{Y}$, while the storage transition constraints \eqref{eq:lp-model-battery-update} and \eqref{eq:lp-model-h2-update} are not linked across year boundaries. Degradation factors are applied according to the original chronological index of each selected year.
\subsection{Adversarial Scenario Generation with Robust Optimization}\label{section:strategic-robust}
Cluster centroids smooth prolonged periods of low renewable generation. Consequently, replacing all renewable trajectories in cluster $c$ with the centroid profile $\overline{v}^{h,c}_{\texttt{source}}$---which we refer to as the \emph{nominal} model---can underestimate the renewable, storage, and process capacities required to maintain production. This is particularly problematic for green ammonia systems, where extended low-generation events (\emph{dunkelflaute}) interact with the HB minimum-load requirements.

To preserve these adverse operating conditions, we construct a worst-case renewable trajectory within each cluster using robust optimization. The resulting finite-scenario approach retains the computational advantages of clustering while recovering the low-generation patterns that drive capacity planning decisions.
\subsubsection{Uncertainty Sets}

Given the mean capacity factor \( \overline{v}_\texttt{source}^{h,c} \) for hour \( h \), cluster \( c \), and renewable \( \texttt{source} \in \{s,w\} \), define the perturbation and the perturbation vector as
\[
u_\texttt{source}^{h,d,y} := v_\texttt{source}^{h,d,y} - \overline{v}_\texttt{source}^{h,c},\, \forall  (d,y) \in \mathcal{D}_c; \quad \boldsymbol{u}^c_\texttt{source} := (u^{h,d,y}_\texttt{source})_{h,\,(d,y)\in \mathcal{D}_c},
\]
respectively. We adopt the Central Limit Theorem (CLT)-inspired polyhedral uncertainty set proposed by \cite{bertsimas_decarbonizing_2023}:
\begin{subequations} \label{eq:clt-uncertainty}
\begin{align}
\mathcal{U}^{c}_{\texttt{source}} = \Big\{ \boldsymbol{u}^c_\texttt{source} \,:\;
    & \quad 0 \le \overline{v}^{h,c}_{\texttt{source}} + u^{h,d,y}_{\texttt{source}} \le 1, & \forall h ,\,(d,y) \in \mathcal{D}_c,
    \label{eq:clt-capacity-bound} \\
    & \quad \left|u^{h,d,y}_{\texttt{source}}\right| \le \rho \sigma^{h,c}_{\texttt{source}},  &\forall h ,\,(d,y) \in \mathcal{D}_c,
    \label{eq:clt-hourly-box} \\
    & \quad \left| \sum_{(d,y) \in \mathcal{D}_c} u^{h,d,y}_{\texttt{source}} \right| \le \Gamma \sigma^{h,c}_{\texttt{source}} \sqrt{|\mathcal{D}_c|}, & \forall h,
        & \label{eq:clt-clt} \\ & \quad \left| u^{h,d,y}_{\texttt{source}} - u^{h-1,d,y}_{\texttt{source}} \right| \le \Delta \sigma^{h,c}_{\Delta,\texttt{source}}, &\forall h \ge 2,\,(d,y) \in \mathcal{D}_c\label{eq:clt-smooth}  \Big\}
\end{align}
\end{subequations}

\noindent
where $\sigma^{h,c}_{\texttt{source}}$ denotes the empirical standard deviation of the capacity factor and $\sigma^{h,c}_{\Delta,\texttt{source}}$ denotes the empirical standard deviation of the first differences. The parameters $\rho$, $\Gamma$, and $\Delta$ control the hourly deviation \eqref{eq:clt-hourly-box}, aggregate deviation \eqref{eq:clt-clt}, and temporal smoothness \eqref{eq:clt-smooth}, respectively.

A one-sided variant is obtained by additionally imposing
$\boldsymbol u^c_{\texttt{source}}\le \boldsymbol 0$.
Under this restriction, \eqref{eq:clt-clt} reduces to an $\ell_1$-norm budget constraint, which together with \eqref{eq:clt-hourly-box} yields the classical budget uncertainty set of \cite{bertsimas_price_2004}. We therefore interpret the budget uncertainty set as a one-sided CLT uncertainty set that only permits adverse deviations from the cluster centroid.
\subsubsection{Existing Approaches: Constraint-Wise and Aggregated Robustness}\label{section:constraintwise-aggregated}
The uncertain renewable capacity factors enter the strategic model through the hourly energy-balance constraint \eqref{eq:lp-model-energy-balance}. A direct robust counterpart requires this constraint to hold for all admissible perturbations of the solar and wind capacity factors:

\begin{equation}\label{eq:full_robust}
     (\overline{v}_s^{h,c} + u_s^{h,d,y}) K_s +  (\overline{v}_w^{h,c} + u_w^{h,d,y}) K_w \ge \sum_{i \in \mathcal{I}_\text{out}} x_{i}^{h,d,y}, \quad \forall (u_s^{h,d,y}, u_w^{h,d,y}) \in \mathcal{U}^{h,c}_s \times \mathcal{U}^{h,c}_w
\end{equation}
where $c$ denotes the cluster assigned to day $(d,y)$. The uncertainty set $\mathcal{U}^c_{\texttt{source}}$ is coupled across hours through the aggregate deviation constraint \eqref{eq:clt-clt} and the temporal smoothness constraint \eqref{eq:clt-smooth}. Directly enforcing \eqref{eq:full_robust} is overly conservative because standard robust reformulations effectively break this coupling and reduce the model to independent constraint-wise uncertainty sets \citep{marandi_when_2018}. For a general discussion of coupled uncertainty sets, see \citet{bertsimas_benefit_2024}.

An alternative is \emph{constraint aggregation} \citep{bertsimas_decarbonizing_2023}, which retains the nominal hourly constraints
\begin{equation}\label{eq:nominal_hourly}
      \overline{v}_s^{h,c} K_s + \overline{v}_w^{h,c} K_w \ge \sum_{i \in\mathcal{I}_\text{out}} x_i^{h,d,y}, \quad \forall h,d,y
\end{equation}
with the following aggregated robust constraints for each cluster $c$, 
\begin{align*}
    &\sum_{h=1}^{24}\sum_{(d,y)\in \mathcal{D}_c}\left[ (\overline{v}_s^{h,c} + u_s^{h,d,y}) K_s +  (\overline{v}_w^{h,c} + u_w^{h,d,y}) K_w\right] \\ = & |\mathcal{D}_c| \sum_{h=1}^{24}\left( \overline{v}_s^{h,c}  K_s +  \overline{v}_w^{h,c} K_w \right)+  \sum_{h=1}^{24}\sum_{(d,y)\in \mathcal{D}_c}\left( u_s^{h,d,y} K_s +   u_w^{h,d,y} K_w\right) \\\ge&  \sum_{h=1}^{24}\sum_{(d,y)\in \mathcal{D}_c}\sum_{i \in \mathcal{I}_\text{out}} x_{i}^{h,d,y}, \quad \forall (\boldsymbol{u}^{c}_s,\boldsymbol{u}^{c}_w) \in \mathcal{U}^{c}_s \times \mathcal{U}^{c}_w.
\end{align*}  
This is equivalent to satisfying the worst-case scenario:
\begin{align}\label{eq:aggregation_wc}
    & |\mathcal{D}_c|\sum_{h=1}^{24} \left( \overline{v}_s^{h,c}  K_s +  \overline{v}_w^{h,c} K_w \right) + \min_{\boldsymbol{u}_s^c \in \mathcal{U}_s^{c}} \sum_{h=1}^{24}\sum_{(d, y)\in \mathcal{D}_c} u_s^{h,d,y} K_s + \min_{\boldsymbol{u}_w^c \in \mathcal{U}_w^{c}} \sum_{h=1}^{24}\sum_{(d, y)\in \mathcal{D}_c} u_w^{h,d,y} K_w \nonumber \\ \ge&\sum_{h=1}^{24}\sum_{(d,y)\in \mathcal{D}_c} \sum_{i \in \mathcal{I}_\text{out}} x_{i}^{h,d,y} , \quad \forall c.
\end{align}

Constraint aggregation reduces the conservatism of the full robust formulation, but it may be insufficiently protective for green ammonia planning. The aggregated constraint protects total renewable generation over an entire cluster rather than enforcing robustness at the hourly level. Therefore, a local period of low generation can violate hourly energy balance even when the aggregate constraint remains satisfied. Furthermore, the robust counterpart provides only an aggregate protection guarantee and does not directly produce an interpretable renewable trajectory that can be used for scenario generation.

Despite this limitation, the aggregated formulation possesses a useful structural property. Since all capacity decisions are made before operations and satisfy $K_s,K_w \ge 0$, the inner adversarial problem separates from the capacity variables:
\begin{equation}\label{eq:rearrange}
    \min_{{\boldsymbol{u}}^{c}_\texttt{source} \in \mathcal{U}_\texttt{source}^{c}}
    \sum_{h=1}^{24}\sum_{(d,y)\in\mathcal{D}_c}
    u_\texttt{source}^{h,d,y} K_\texttt{source}
    =
    K_\texttt{source}
    \min_{\boldsymbol{u}^{c}_\texttt{source}\in\mathcal{U}_\texttt{source}^{c}}
    \sum_{h=1}^{24}\sum_{(d,y)\in\mathcal{D}_c}
    u_\texttt{source}^{h,d,y},
    \quad \forall c,\texttt{source}.
\end{equation}

As a result, the worst-case perturbation can be computed independently of the optimal capacity decisions. Proposition~\ref{prop:rc-aggregation} characterizes the robust counterpart of the aggregated formulation and yields a conservative closed-form approximation. More importantly, it identifies a worst-case perturbation vector that can subsequently be used to construct adversarial renewable trajectories.

\begin{proposition}\label{prop:rc-aggregation}
For each cluster $c$ and \texttt{source}, let
\begin{equation}\label{eq:closed-form}
    \boldsymbol{\hat{u}}^{c}_\texttt{source}
    =
    \arg\min_{\boldsymbol{u}^{c}_\texttt{source}\in\mathcal{U}^{c}_\texttt{source}}
    \sum_{h=1}^{24}\sum_{(d,y)\in\mathcal{D}_c}
    u_\texttt{source}^{h,d,y}.
\end{equation}

For any $K_s,K_w \ge 0$, the robust counterpart of
\eqref{eq:aggregation_wc} is
\begin{equation}\label{eq:aggregation_rc}
   |\mathcal{D}_c|
   \sum_{h=1}^{24}
   \left(
   \overline{v}_s^{h,c}K_s
   +
   \overline{v}_w^{h,c}K_w
   \right)
   +
   \sum_{h=1}^{24}\sum_{(d,y)\in\mathcal{D}_c}
   \left(
   \hat{u}_s^{h,d,y}K_s
   +
   \hat{u}_w^{h,d,y}K_w
   \right)
   \ge
   \sum_{h=1}^{24}\sum_{(d,y)\in\mathcal{D}_c}
   \sum_{i\in\mathcal{I}_{\mathrm{out}}}
   x_i^{h,d,y},
   \quad \forall c.
\end{equation}

Furthermore, a conservative approximation is given by
\begin{equation}\label{eq:conservative-aggregation}
    |\mathcal{D}_c|
    \sum_{h=1}^{24}
    \left(
    \overline{v}_s^{h,c}K_s
    +
    \overline{v}_w^{h,c}K_w
    \right)
    -
    \Gamma\sqrt{|\mathcal{D}_c|}
    \sum_{h=1}^{24}
    \left(
    \sigma_s^{h,c}K_s
    +
    \sigma_w^{h,c}K_w
    \right)
    \ge
    \sum_{h=1}^{24}\sum_{(d,y)\in\mathcal{D}_c}
    \sum_{i\in\mathcal{I}_{\mathrm{out}}}
    x_i^{h,d,y},
    \quad \forall c,
\end{equation}
that is, satisfying \eqref{eq:conservative-aggregation} implies satisfying \eqref{eq:aggregation_rc}. If
\[
\left|
\sum_{(d,y)\in\mathcal{D}_c}
\hat{u}^{h,d,y}_{\texttt{source}}
\right|
=
\Gamma
\sigma^{h,c}_{\texttt{source}}
\sqrt{|\mathcal{D}_c|},
\quad
\forall h,c,\texttt{source},
\]
then \eqref{eq:conservative-aggregation} is exact.
\end{proposition}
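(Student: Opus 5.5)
The plan has three steps: reduce the robust constraint to a minimum over the uncertainty set, lower-bound that minimum by the aggregate constraint \eqref{eq:clt-clt}, and check exactness when that constraint is tight.

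\textbf{Robust counterpart.} Fix a cluster $c$ and $K_s,K_w\ge 0$. The robust constraint requires the aggregated inequality to hold for every $(\boldsymbol u^c_s,\boldsymbol u^c_w)\in\mathcal U^c_s\times\mathcal U^c_w$. This is equivalent to requiring it at the minimum of the left-hand side over that set, which is \eqref{eq:aggregation_wc}.

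The left-hand side is separable in $\boldsymbol u^c_s$ and $\boldsymbol u^c_w$, and the feasible set is a product. So the minimum splits into one minimum per source. Each $\mathcal U^c_{\texttt{source}}$ is a nonempty compact polyhedron: it is bounded by \eqref{eq:clt-hourly-box} and contains $\boldsymbol 0$. Hence the linear program in \eqref{eq:closed-form} attains its minimum, and $\hat{\boldsymbol u}^c_{\texttt{source}}$ is well defined.

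By \eqref{eq:rearrange}, which uses $K_{\texttt{source}}\ge0$, the minimizer can be taken independent of $K$. That is, $\min_{\boldsymbol u}\sum u\,K_{\texttt{source}} = K_{\texttt{source}}\sum_{h,(d,y)}\hat u^{h,d,y}_{\texttt{source}}$. Substituting this into \eqref{eq:aggregation_wc} gives \eqref{eq:aggregation_rc}.

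\textbf{Conservative approximation.} Write $\sum_{h}\sum_{(d,y)\in\mathcal D_c}\hat u^{h,d,y}_{\texttt{source}}=\sum_h S_h$, where $S_h:=\sum_{(d,y)\in\mathcal D_c}\hat u^{h,d,y}_{\texttt{source}}$. Since $\hat{\boldsymbol u}^c_{\texttt{source}}$ is feasible, \eqref{eq:clt-clt} gives
\[
S_h\ge -|S_h|\ge -\Gamma\sigma^{h,c}_{\texttt{source}}\sqrt{|\mathcal D_c|}.
\]
Multiplying by $K_{\texttt{source}}\ge 0$ and summing over $h$ and over the two sources shows that the left-hand side of \eqref{eq:aggregation_rc} is at least the left-hand side of \eqref{eq:conservative-aggregation}. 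So any $(K,x)$ satisfying \eqref{eq:conservative-aggregation} also satisfies \eqref{eq:aggregation_rc}.

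\textbf{Exactness.} Suppose $|S_h|=\Gamma\sigma^{h,c}_{\texttt{source}}\sqrt{|\mathcal D_c|}$ for all $h,c,\texttt{source}$. We need $S_h\le 0$, so that $S_h=-\Gamma\sigma\sqrt{|\mathcal D_c|}$ and the two left-hand sides coincide.

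\begin{itemize}
\item Observe that $\boldsymbol 0\in\mathcal U^c_{\texttt{source}}$, because $0\le\overline v^{h,c}\le 1$: the centroid is an average of capacity factors. Hence the optimal value satisfies $\sum_h S_h\le 0$.
\item This alone does not give $S_h\le0$ for each individual $h$. The constraints couple the hours only within the same day, through \eqref{eq:clt-smooth}.
\item If the hypothesis is read as tightness at the worst case, then $S_h=-\Gamma\sigma\sqrt{|\mathcal D_c|}$ directly. Otherwise, I would argue per hour: if some $S_h>0$, negating all perturbations at that hour lowers the objective while staying in the set. The box constraint \eqref{eq:clt-hourly-box} and the aggregate constraint \eqref{eq:clt-clt} are symmetric under this negation.
\end{itemize}

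\textbf{Main obstacle.} The negation argument in the last bullet is the delicate step. Negating one hour's perturbations may break feasibility in two ways:
\begin{itemize}
\item the capacity bounds \eqref{eq:clt-capacity-bound}, which are asymmetric about the centroid;
\item the smoothness constraints \eqref{eq:clt-smooth}, which link hour $h$ to hours $h\pm1$.
\end{itemize}
In that case I would fall back on the interpretation above: under the stated hypothesis, the optimal $\hat u$ attains the lower bound of \eqref{eq:clt-clt} at every hour, that is, $S_h=-\Gamma\sigma^{h,c}_{\texttt{source}}\sqrt{|\mathcal D_c|}$. Then equality holds term by term.
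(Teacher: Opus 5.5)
Your three steps match the paper's proof. You use separation via $K_s,K_w\ge 0$ and \eqref{eq:rearrange}, substitute $\hat{\boldsymbol u}^c_{\texttt{source}}$ into \eqref{eq:aggregation_wc}, bound each hourly sum $S_h\ge -\Gamma\sigma^{h,c}_{\texttt{source}}\sqrt{|\mathcal D_c|}$ by \eqref{eq:clt-clt}, and get equality when that bound is attained. The paper's exactness step is exactly your fallback: it says the approximation is exact ``when the minimizers attain this bound,'' and never checks the sign. Relative to the paper, nothing is missing. Relative to the literal hypothesis $|S_h|=\Gamma\sigma^{h,c}_{\texttt{source}}\sqrt{|\mathcal D_c|}$, however, you leave $S_h\le 0$ unproved. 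Your per-hour negation does fail, for the reasons you give: the capacity bounds are asymmetric and \eqref{eq:clt-smooth} links neighbouring hours.

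The step can be closed by averaging over days and then truncating. Let $n=|\mathcal D_c|$, keep your $S_h$, and define $\tilde u^{h,d,y}:=\min(S_h,0)/n$ for every $(d,y)\in\mathcal D_c$. This point lies in $\mathcal U^c_{\texttt{source}}$, for four reasons.
\begin{itemize}
\item The bounds in \eqref{eq:clt-capacity-bound}--\eqref{eq:clt-hourly-box} depend only on $(h,c)$, not on the day. So the day-average $S_h/n$ of the $\hat u^{h,d,y}$ satisfies them.
\item Hence $\min(S_h/n,0)$ satisfies them too, because each lower bound is $\le 0$ and each upper bound is $\ge 0$. This uses $0\le\overline v^{h,c}_{\texttt{source}}\le 1$.
\item Constraint \eqref{eq:clt-clt} holds since $|\min(S_h,0)|\le |S_h|$.
\item Constraint \eqref{eq:clt-smooth} holds since $t\mapsto\min(t,0)$ is $1$-Lipschitz and $|S_h-S_{h-1}|/n\le \frac{1}{n}\sum_{(d,y)\in\mathcal D_c}|\hat u^{h,d,y}-\hat u^{h-1,d,y}|\le \Delta\sigma^{h,c}_{\Delta,\texttt{source}}$.
\end{itemize}
The objective value of $\tilde u$ is $\sum_h\min(S_h,0)$. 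This is strictly smaller than $\sum_h S_h$ if some $S_h>0$, which contradicts optimality of $\hat{\boldsymbol u}^c_{\texttt{source}}$. So every minimizer has $S_h\le 0$ for all $h$. The hypothesis therefore forces $S_h=-\Gamma\sigma^{h,c}_{\texttt{source}}\sqrt{|\mathcal D_c|}$, and your term-by-term equality goes through for the statement as literally written.

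Averaging first is essential. Truncating $\hat{\boldsymbol u}$ entrywise preserves the box, capacity and smoothness constraints, but it need not preserve \eqref{eq:clt-clt} at hours where $S_h$ already sits at $-\Gamma\sigma^{h,c}_{\texttt{source}}\sqrt{|\mathcal D_c|}$ while some individual entries are positive.
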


\begin{proof}
Since $K_s,K_w \ge 0$, the separation property
\eqref{eq:rearrange} holds. Substituting the minimizers
$\hat{\boldsymbol{u}}_s^c$ and $\hat{\boldsymbol{u}}_w^c$
into \eqref{eq:aggregation_wc} yields the robust counterpart
\eqref{eq:aggregation_rc}. The conservative approximation
\eqref{eq:conservative-aggregation} follows by replacing the optimal
perturbation values with the CLT bound \eqref{eq:clt-clt}. When the
minimizers attain this bound at every hour, the approximation is exact.
\end{proof}
\subsubsection{Adversarial Scenario Generation}

The worst-case perturbation vector identified in Proposition~\ref{prop:rc-aggregation} naturally defines an adversarial renewable trajectories. We use this observation to construct a finite-scenario robustification scheme that lies between full constraint-wise robustness and constraint aggregation. Unlike constraint aggregation, the proposed approach preserves hourly energy-balance protection. Unlike the full robust counterpart, it replaces the uncertainty set with a single representative worst-case realization.

Specifically, let $\hat{\boldsymbol{u}}^c_\texttt{source}$ be the solution of \eqref{eq:closed-form}. We enforce this worst-case realization directly in the hourly energy-balance constraints \eqref{eq:lp-model-energy-balance}:
\begin{equation}\label{eq:one_wc}
     (\overline{v}_s^{h,c} + \hat{u}_s^{h,d,y}) K_s
     +
     (\overline{v}_w^{h,c} + \hat{u}_w^{h,d,y}) K_w
     \ge
     \sum_{i \in \mathcal{I}_\text{out}} x_{i}^{h,d,y},
     \quad
     \forall h \in [24],\,
     (d,y) \in \mathcal{D}_c .
\end{equation}

This formulation admits a natural interpretation as generating a synthetic worst-case renewable trajectory for each cluster. The resulting trajectories can then be used as representative scenarios in the strategic planning model, yielding a compressed yet adversarial representation of the original renewable dataset.

Proposition~\ref{thm:hierarchy} establishes the robustness hierarchy of the proposed approach under the budget uncertainty set. The finite-scenario formulation is less conservative than the full robust counterpart while remaining more conservative than constraint aggregation. This hierarchy relies on the one-sided structure of the budget uncertainty set and does not generally hold for the CLT.
\begin{proposition}[Robustness hierarchy]\label{thm:hierarchy}
Let $\mathcal{F}_{\mathrm{full}}$, $\mathcal{F}_{\mathrm{one}}$, and $\mathcal{F}_{\mathrm{agg}}$ denote the feasible regions of the full robust counterpart, the finite-scenario formulation \eqref{eq:one_wc}, and the constraint aggregation formulation, respectively. Under the budget uncertainty set,
\[
\mathcal{F}_{\mathrm{full}}
\subseteq
\mathcal{F}_{\mathrm{one}}
\subseteq
\mathcal{F}_{\mathrm{agg}}.
\]
\end{proposition}

\begin{proof}
The full robust constraint (\ref{eq:full_robust}) imposes feasibility for all realizations. Therefore, satisfying (\ref{eq:full_robust}) implies satisfying one worst-case realization (\ref{eq:one_wc}), so $\mathcal{F}_\text{full} \subseteq \mathcal{F}_\text{one}$. In constraint aggregation, we impose the nominal hourly constraints (\ref{eq:nominal_hourly}) and worst-case on the aggregated constraints (\ref{eq:aggregation_wc}). 
For the budget uncertainty set, the worst-case satisfies $\hat{\boldsymbol{u}}_\texttt{source}^c \le \boldsymbol{0}, \ \forall c,\texttt{source}$, and is a minimizer. Therefore, satisfying (\ref{eq:one_wc}) implies satisfying (\ref{eq:nominal_hourly}) and (\ref{eq:aggregation_wc}), hence $\mathcal{F}_\text{one} \subseteq \mathcal{F}_\text{agg}$.
\end{proof}

\subsubsection{Diversifying Adversarial Trajectories}
Problem~\eqref{eq:closed-form} generally admits multiple optimal solutions. However, solving it directly with a standard solver often produces nearly identical trajectories, likely because the solver’s initialization tends to yield similar patterns across hours. This reduces scenario diversity and weakens coverage. To address this, we propose a two-stage randomized procedure. First, we solve~\eqref{eq:closed-form} to obtain the optimal objective value $\zeta^*$ and enforce this value as a constraint, restricting the search to the set of optimal solutions. Next, to generate diverse trajectories, we draw independent samples $\boldsymbol{\mu}^c_\texttt{source} := ({\mu}^{h,d,y}_\texttt{source})_{h,\,(d,y)\in \mathcal{D}_c}$ uniformly from the bounds~\eqref{eq:clt-capacity-bound}--\eqref{eq:clt-hourly-box}. Specifically, for each cluster $c$, hour $h$, day $(d,y) \in \mathcal{D}_c$, and renewable $\texttt{source}$, we sample from $\texttt{Uniform}([L^{h,d,y}_\texttt{source},\, U_\texttt{source}^{h,d,y}])$, where \[
L^{h,d,y}_\texttt{source} \;=\; \max\{-\rho\sigma^{h,c}_\texttt{source},\; -\overline v^{h,c}_\texttt{source}\}, 
\qquad 
U^{h,d,y}_\texttt{source} \;=\;
\begin{cases}
\min\{1-\overline{v}^{h,c}_\texttt{source},\;\rho\sigma^{h,c}_\texttt{source}\}, & \text{if CLT}, \\[6pt]

0, & \text{if budget}.
\end{cases}
\]
Next, each sample $\boldsymbol{\mu}^c_\texttt{source}$ is projected onto the feasible set by solving
\[
\min_{\boldsymbol{u}^c_\texttt{source} \in \mathcal{U}^c_\texttt{source}} \quad \|\boldsymbol{u}^c_\texttt{source} - \boldsymbol{\mu}^c_\texttt{source}\|_2^2, \quad \text{s.t.} \sum_{h,(d,y) \in \mathcal{D}_c}  u_\texttt{source}^{h,d,y} = \zeta^*.\]
The quadratic projection ensures a unique randomized adversarial trajectory is generated. Figure~\ref{fig:cluster1_worst} illustrates this randomized finite scenario approach for one of the clusters with the CLT uncertainty set with the robust parameters set to \( (\rho, \Gamma, \Delta) = (3, 3, 3) \).
\begin{figure}[tbp]
    \centering
    \includegraphics[width=\linewidth]{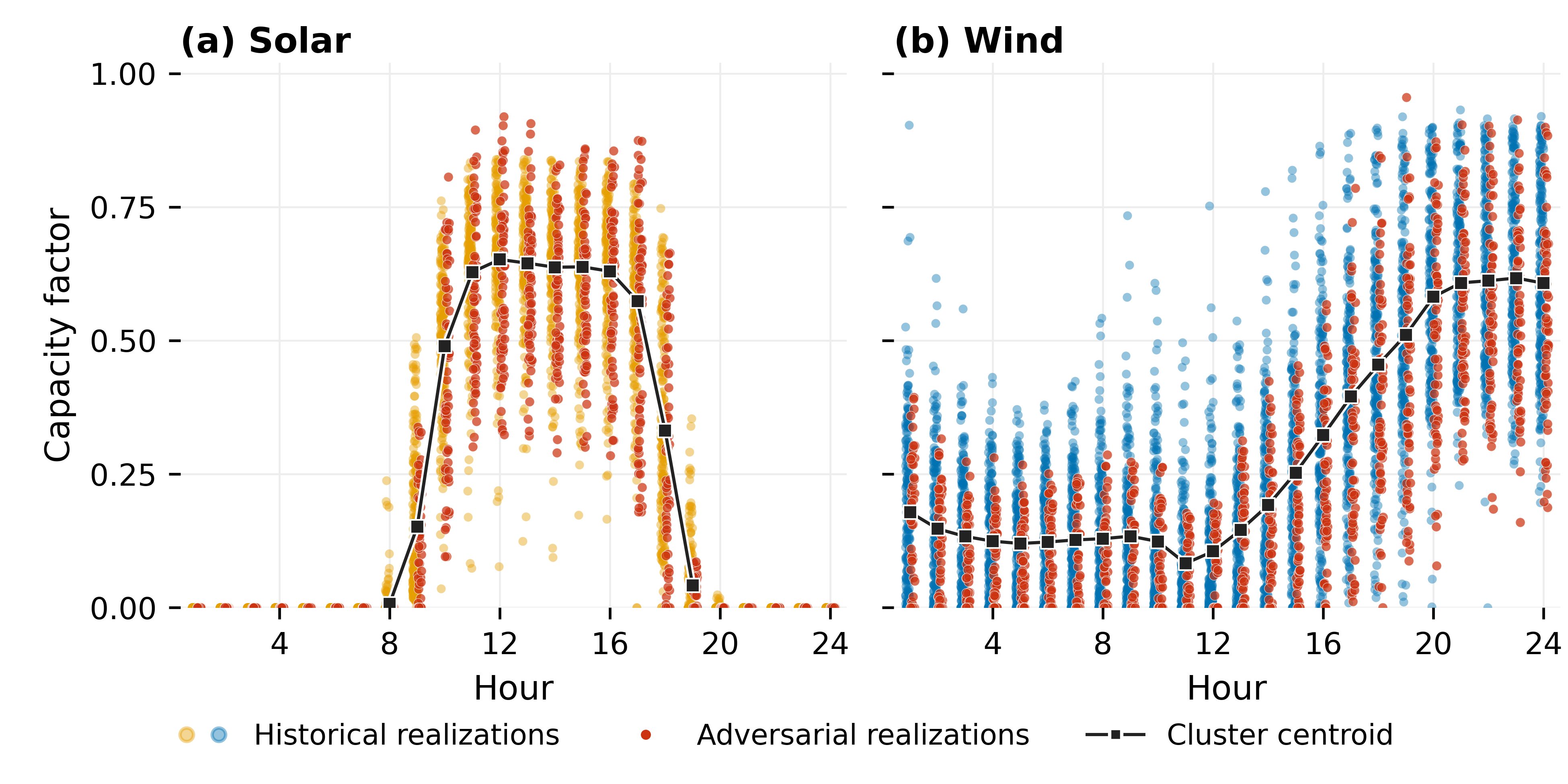}
\caption{Historical renewable generation realizations (orange and blue points), cluster centroids (black line), and diversified adversarial realizations (red points) for a representative cluster derived from 15 years of hourly solar and wind capacity factor data. The adversarial realizations are generated from the CLT uncertainty set with parameters $(\rho,\Gamma,\Delta)=(3,3,3)$.}
\label{fig:cluster1_worst} 
\end{figure}
\section{Operational Model}\label{section:operational}

This section develops the operational model used to dispatch flows under fixed capacities $\boldsymbol{K}$ obtained from the strategic model. Unlike the strategic model, which uses historical renewable realizations for capacity planning, the operational model uses short-term solar and wind forecasts in a rolling-horizon setting. We first describe the modifications needed to convert the strategic model into an operational dispatch model, and then develop nominal, static robust, and adaptive robust formulations under forecast uncertainty.
\subsection{Modifications to the Strategic Model}

The operational model retains the overall structure and constraints of the strategic model (Section~\ref{section:lp-formulation}), but differs in three respects. First, it is implemented in a rolling-horizon framework using renewable forecasts rather than historical realizations. Second, limited grid imports are permitted to preserve feasibility under forecast errors. Third, the objective is modified to balance ammonia production, storage preservation, and grid usage.

\subsubsection{Rolling Horizon}

We assume access to short-term forecasts of solar and wind generation. This motivates a \emph{rolling-horizon} framework in which operational decisions are optimized over a prediction horizon of length $T$, but only the initial portion of the solution is implemented before the model is re-solved using updated forecasts. Let $t \in [T]$ index time within the prediction horizon. Time is therefore re-indexed from $(h,d,y)$, which denotes absolute time in the strategic model, to $t$, which denotes time relative to the current optimization horizon.

The forecast solar and wind capacity factors are denoted by $\overline v_s^t$ and $\overline v_w^t$, respectively. At the end of each control horizon, the realized battery and hydrogen storage levels are carried forward as the initial conditions for the next optimization, replacing \eqref{eq:lp-model-init-battery} and \eqref{eq:lp-model-init-h2} to ensure temporal consistency.

\subsubsection{Grid Intake}

The strategic model disallows grid imports to ensure that installed renewable and storage capacities are sufficient to satisfy production requirements. During operations, however, forecast errors and unexpected renewable shortfalls may render the system infeasible. To preserve feasibility, we introduce a grid-import variable $x_{\mathrm{grid}}^t \ge 0$ and modify the energy-balance constraint \eqref{eq:lp-model-energy-balance} to

\begin{equation}\label{eq:operational-balance}
   \sum_{i \in \mathcal{I}\text{out}} x_i^t - x_{\mathrm{grid}}^t
   \le
   v_s^t K_s + v_w^t K_w,
   \qquad \forall t,
\end{equation}
where $v_{\texttt{source}}^t$ denotes the realized capacity factor of renewable source $\texttt{source}$. To discourage reliance on external electricity, grid imports are penalized heavily in the objective.

\subsubsection{Objective}

With capacities fixed, minimizing LCOA is equivalent to maximizing ammonia production. However, maximizing production alone over a finite prediction horizon can incentivize excessive depletion of battery and hydrogen inventories, leaving the system vulnerable to forecast errors in subsequent horizons. To mitigate this effect, we reward storage preservation through positive coefficients $k_B$ and $k_H$ for battery and hydrogen storage, respectively. Together with a large penalty $M_{\mathrm{grid}}$ on grid imports, the objective becomes

\begin{equation}\label{eq:operational-objective}
    \max_{\boldsymbol{x},\boldsymbol{w}}
    \quad
    \sum_{t=1}^{T}
    \Big(
    w_{NH_3}^{t}
    + k_B x_{b:\mathrm{store}}^{t}
    + k_H w_{H_2:\mathrm{store}}^{t}
    - M_{\mathrm{grid}} x_{\mathrm{grid}}^{t}
    \Big).
\end{equation}
Unlike standard rolling-horizon formulations that reward only terminal storage levels, we reward storage throughout the horizon. This provides an additional buffer against renewable uncertainty and helps maintain feasibility of the HB minimum-load constraints.

Finally, since ammonia production is already maximized through the objective, the annual production constraint \eqref{eq:lp-model-annual-target} is removed.

\subsection{Adaptive Robust Optimization}

We develop an adaptive robust optimization model in which operational flow decisions adjust to renewable forecast errors. The operational model allows flow decisions to adapt to realized deviations in solar and wind generation. We adopt affine decision rules to obtain a tractable approximation of the fully adaptive problem. We first introduce the affine decision rules, then define the uncertainty set, derive the affinely adaptive robust counterparts (AARCs), and finally establish probabilistic guarantees for constraint satisfaction.

\subsubsection{Affine Decision Rules}

Fully adaptive policies are generally intractable. A common tractable approximation is to restrict the adaptive decisions to affine decision rules \citep{ben-tal_adjustable_2004}:
\[
\boldsymbol{x}(\boldsymbol{u})
=
\boldsymbol{z}_x
+
\boldsymbol{V}_x\boldsymbol{u},
\qquad
\boldsymbol{w}(\boldsymbol{u})
=
\boldsymbol{z}_w
+
\boldsymbol{V}_w\boldsymbol{u},
\]
where
$\boldsymbol{z}_x \in \mathbb{R}^{|\mathcal{I}|T}$,
$\boldsymbol{z}_w \in \mathbb{R}^{|\mathcal{J}|T}$,
$\boldsymbol{V}_x \in \mathbb{R}^{|\mathcal{I}|T \times 2T}$,
and
$\boldsymbol{V}_w \in \mathbb{R}^{|\mathcal{J}|T \times 2T}$
are decision variables.

To reduce complexity, we further restrict attention to \emph{stage-wise} affine policies, where each flow at time $t$ depends only on the uncertainty realized at time $t$. This reduces the number of affine coefficients from fully affine policy's $O(T^2)$ to $O(T)$. Equivalently, each row of $\boldsymbol{V}_x$ and $\boldsymbol{V}_w$ contains nonzero entries only in the columns corresponding to the same time step. Thus, for each $i\in\mathcal{I}$, $j\in\mathcal{J}$, and $t\in[T]$,
\begin{subequations}
\begin{align}
x_i^t(u_s^t,u_w^t)
&=
z_i^t
+
V_x^{(i,t),(s,t)}u_s^t
+
V_x^{(i,t),(w,t)}u_w^t,
\label{eq:adr-x}
\\
w_j^t(u_s^t,u_w^t)
&=
z_j^t
+
V_w^{(j,t),(s,t)}u_s^t
+
V_w^{(j,t),(w,t)}u_w^t.
\label{eq:adr-w}
\end{align}
\end{subequations}
The grid-intake variable $x_{\mathrm{grid}}^t$ is not assigned an explicit affine decision rule. Instead, it is determined implicitly as the minimum grid import required to satisfy \eqref{eq:operational-balance} after the renewable capacity factors are realized. Equivalently, $x_{\mathrm{grid}}^t$ represents the renewable shortfall needed to maintain feasibility under the realized uncertainty. 

\subsubsection{Uncertainty Set}

Let
\(
\overline{\boldsymbol{v}}
=
(\overline{v}_s^1,\ldots,\overline{v}_s^T,\,
\overline{v}_w^1,\ldots,\overline{v}_w^T)^\top
\)
denote the vector of point forecasts. For each renewable source
$\texttt{source}\in\{s,w\}$ and time step $t\in[T]$, let
$u_{\texttt{source}}^t$ denote the forecast error, so that the realized
capacity factor is
\(
v_{\texttt{source}}^t
=
\overline{v}_{\texttt{source}}^t
+
u_{\texttt{source}}^t.
\) Since capacity factors are bounded between $0$ and $1$, the perturbations satisfy
\(
0\le
\overline{v}_{\texttt{source}}^t
+
u_{\texttt{source}}^t
\le
1,
\)
which induces a natural box constraint on the forecast errors.

If the uncertainty set is compact and constraint-wise separable, static robust and adaptive robust formulations attain the same optimal value \citep{marandi_when_2018}. Consequently, some form of coupling across time is required for adaptivity to provide value. Among the most common coupled uncertainty sets are budget and ellipsoidal uncertainty sets. Budget uncertainty sets preserve linearity and are therefore attractive in large-scale mixed-integer settings, whereas ellipsoidal uncertainty sets yield second-order cone programs and admit stronger probabilistic guarantees under both approximately Gaussian and distribution-free forecast errors \citep{bertsimas_probabilistic_2021}. If hot-idling and shutdown decisions were incorporated into the operational model, budget uncertainty sets would likely be preferable because they preserve linearity, while ellipsoidal uncertainty sets would lead to mixed-integer second-order cone programs with substantially greater computational burden. Since we focus on the continuous operational model, we adopt an ellipsoidal-box uncertainty set
\begin{subequations}\label{eq:operational-uset}
\begin{align}
\mathcal{U}
=
\Big\{
\boldsymbol{u}
=
(u_s^1,\ldots,u_s^T,\,
u_w^1,\ldots,u_w^T)^\top
:\;
&\;
0
\le
\overline{v}_{\texttt{source}}^t
+
u_{\texttt{source}}^t
\le
1,
&&
\forall t,\texttt{source},
\label{eq:uset-capacity-bound}
\\
&\;
\left\|
\Sigma^{-1/2}\boldsymbol{u}
\right\|_2
\le
\Omega
\Big\},
\label{eq:uset-ellipsoid}
\end{align}
\end{subequations}
where $\Sigma \succeq 0$ is the covariance matrix of the forecast errors and $\Omega \ge 0$ is a robustness parameter. Probabilistic guarantees for calibrating $\Omega$ are provided in Appendix~\ref{app:prob-guarantee}.
\subsubsection{Robust Counterpart}

Substituting the affine decision rules \eqref{eq:adr-x}--\eqref{eq:adr-w}
into the objective and constraints yields a family of robust constraints
that depend explicitly on the uncertainty vector $\boldsymbol{u}$.
We first reformulate the max--min objective through an epigraph variable
$\psi$, obtaining
\[
\max_{\boldsymbol{z}_x,\boldsymbol{z}_w,
      \boldsymbol{V}_x,\boldsymbol{V}_w,\psi}
\quad \psi
\]
subject to
\begin{equation}\label{eq:operational-epigraph}
\min_{\boldsymbol{u}\in\mathcal U}
f(\boldsymbol{z}_x,\boldsymbol{z}_w,
  \boldsymbol{V}_x,\boldsymbol{V}_w,\boldsymbol{u})
\ge \psi,
\end{equation}
where $f(\cdot)$ denotes the operational objective
\eqref{eq:operational-objective} after substitution of the affine decision
rules.

To derive the affinely adaptive robust counterpart (AARC), we write each
constraint in the generic form
\begin{equation}\label{eq:constraint-general}
\overline{\boldsymbol{a}}_l^\top \boldsymbol{K}
+
\delta_l^\psi \psi
+
\delta_l^e\,\boldsymbol{u}^\top
\boldsymbol{K}_{\mathrm{source}}^t
+
\boldsymbol{d}_{x,l}^\top
(\boldsymbol{z}_x+\boldsymbol{V}_x\boldsymbol{u})
+
\boldsymbol{d}_{w,l}^\top
(\boldsymbol{z}_w+\boldsymbol{V}_w\boldsymbol{u})
\le b_l,
\quad
\forall \boldsymbol{u}\in\mathcal U,
\end{equation}
where $\delta_l^\psi=1$ if constraint $l$ corresponds to the epigraph
constraint \eqref{eq:operational-epigraph} and $0$ otherwise,
while $\delta_l^e=1$ if $l$ corresponds to the energy-balance constraint
\eqref{eq:operational-balance} and $0$ otherwise. Proposition~\ref{prop:aarc} states the AARC.
\begin{proposition}[AARC derivation]\label{prop:aarc}
Let
\(\boldsymbol L=-\overline{\boldsymbol v}, \ \boldsymbol U=\boldsymbol 1-\overline{\boldsymbol v}.\)
Under the ellipsoidal-box uncertainty set
\eqref{eq:operational-uset}, the affinely adaptive robust counterpart of
\eqref{eq:constraint-general} is
\begin{align}
&
\overline{\boldsymbol a}_l^\top \boldsymbol K
+
\delta_l^\psi \psi
+
\boldsymbol d_{x,l}^\top \boldsymbol z_x
+
\boldsymbol d_{w,l}^\top \boldsymbol z_w
+
\boldsymbol U^\top\boldsymbol\beta
-
\boldsymbol L^\top\boldsymbol\alpha
\nonumber\\
&\quad
+
\Omega
\left\|
\Sigma^{1/2}
\left(
\delta_l^e
\boldsymbol K_{\mathrm{source}}^t
+
\boldsymbol V_x^\top\boldsymbol d_{x,l}
+
\boldsymbol V_w^\top\boldsymbol d_{w,l}
-
\boldsymbol\beta
+
\boldsymbol\alpha
\right)
\right\|_2
\le b_l,
\label{eq:aarc}
\end{align}
with auxiliary variables
$\boldsymbol\alpha,\boldsymbol\beta\ge\boldsymbol 0$.
\end{proposition}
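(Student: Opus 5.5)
The plan is to collect all $\boldsymbol u$-dependent terms of \eqref{eq:constraint-general} into a single linear function of $\boldsymbol u$. The semi-infinite constraint then reduces to bounding a support function of $\mathcal U$, and LP/conic duality gives the bound. With the affine rules substituted, the left-hand side is
\[
\overline{\boldsymbol a}_l^\top\boldsymbol K+\delta_l^\psi\psi+\boldsymbol d_{x,l}^\top\boldsymbol z_x+\boldsymbol d_{w,l}^\top\boldsymbol z_w+\boldsymbol g_l^\top\boldsymbol u,
\qquad
\boldsymbol g_l:=\delta_l^e\boldsymbol K^t_{\mathrm{source}}+\boldsymbol V_x^\top\boldsymbol d_{x,l}+\boldsymbol V_w^\top\boldsymbol d_{w,l}.
\]
So \eqref{eq:constraint-general} is equivalent to the deterministic part plus $\max_{\boldsymbol u\in\mathcal U}\boldsymbol g_l^\top\boldsymbol u$ being at most $b_l$. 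Because the stage-wise structure only imposes zeros in $\boldsymbol V_x,\boldsymbol V_w$, it plays no role beyond this linearization. The box \eqref{eq:uset-capacity-bound} is exactly $\boldsymbol L\le\boldsymbol u\le\boldsymbol U$ with $\boldsymbol L=-\overline{\boldsymbol v}$ and $\boldsymbol U=\boldsymbol 1-\overline{\boldsymbol v}$.

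To bound the support function, I will write $\mathcal U$ as the intersection of the box $B=\{\boldsymbol L\le\boldsymbol u\le\boldsymbol U\}$ and the ellipsoid $E=\{\boldsymbol u:\|\Sigma^{-1/2}\boldsymbol u\|_2\le\Omega\}$. I will then use Lagrangian duality only on the box constraints. For multipliers $\boldsymbol\beta\ge\boldsymbol 0$ on $\boldsymbol u\le\boldsymbol U$ and $\boldsymbol\alpha\ge\boldsymbol 0$ on $\boldsymbol u\ge\boldsymbol L$, every $\boldsymbol u\in\mathcal U$ satisfies
\[
\boldsymbol g_l^\top\boldsymbol u\le \boldsymbol g_l^\top\boldsymbol u+\boldsymbol\beta^\top(\boldsymbol U-\boldsymbol u)+\boldsymbol\alpha^\top(\boldsymbol u-\boldsymbol L)
=\boldsymbol U^\top\boldsymbol\beta-\boldsymbol L^\top\boldsymbol\alpha+(\boldsymbol g_l-\boldsymbol\beta+\boldsymbol\alpha)^\top\boldsymbol u .
\]
Maximizing the last term over $E$ uses the standard identity $\max_{\boldsymbol u\in E}\boldsymbol c^\top\boldsymbol u=\Omega\|\Sigma^{1/2}\boldsymbol c\|_2$. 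To prove it, substitute $\boldsymbol u=\Sigma^{1/2}\boldsymbol\xi$ and apply Cauchy--Schwarz. This gives weak duality: if there exist $\boldsymbol\alpha,\boldsymbol\beta\ge\boldsymbol 0$ satisfying \eqref{eq:aarc}, then \eqref{eq:constraint-general} holds for every $\boldsymbol u\in\mathcal U$. This is the direction needed for the counterpart to be a safe reformulation.

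For the converse, I will show that the infimum over $\boldsymbol\alpha,\boldsymbol\beta\ge\boldsymbol 0$ of the right-hand side equals $\max_{\mathcal U}\boldsymbol g_l^\top\boldsymbol u$ and is attained. This is conic (SOCP) strong duality for a linear objective over the intersection of a polyhedron and an ellipsoid. A Slater-type condition suffices, since the box constraints are linear and only the ellipsoid needs relative interiority. When $\Omega>0$ and some forecast lies strictly inside $(0,1)$, the point $\boldsymbol u=\boldsymbol 0$ is feasible and lies in the interior of $E$, so strong duality holds and the dual infimum is attained. This is the step I expect to be the main obstacle. The degenerate cases need care: forecasts equal to exactly $0$ or $1$ make $\boldsymbol L$ or $\boldsymbol U$ vanish in some coordinates, and singular $\Sigma$ or $\Omega=0$ make $\Sigma^{-1/2}$ and the ellipsoid degenerate. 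I would handle singular $\Sigma$ by interpreting $E$ as $\{\Sigma^{1/2}\boldsymbol\xi:\|\boldsymbol\xi\|_2\le\Omega\}$, which makes the support-function identity exact. For the box, only the linear-constraint version of Slater's condition is needed, together with feasibility of $\boldsymbol 0\in\mathcal U$, which always holds because $0\le\overline{\boldsymbol v}\le 1$.

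Finally, replacing the maximum by the attained dual minimum and moving the auxiliary variables into the constraint yields exactly \eqref{eq:aarc}, jointly in $(\boldsymbol z,\boldsymbol V,\psi,\boldsymbol\alpha,\boldsymbol\beta)$. I will also note that applying this to the epigraph constraint \eqref{eq:operational-epigraph}, written with $\delta_l^\psi=1$, handles the objective. Each constraint is treated separately, with its own $\boldsymbol\alpha,\boldsymbol\beta$, since the constraints are robust individually.
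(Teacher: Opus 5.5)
Your proposal is correct and takes the same route as the paper's proof. Like the paper, you dualize only the box constraints with multipliers $\boldsymbol\alpha,\boldsymbol\beta\ge\boldsymbol 0$ and then evaluate the ellipsoidal support function $\Omega\|\Sigma^{1/2}(\boldsymbol g_l-\boldsymbol\beta+\boldsymbol\alpha)\|_2$. Your explicit strong-duality step, the refined Slater condition with $\boldsymbol u=\boldsymbol 0$ in the relative interior of the ellipsoid and satisfying the affine box constraints, supplies the converse direction that the paper only asserts with ``therefore \ldots\ equivalent''; it also shows your extra hypothesis that some forecast lies strictly in $(0,1)$ is unnecessary.
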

\begin{proof}
The result follows from conic duality and the support function of the ellipsoidal-box uncertainty set. The full derivation is provided in Appendix~\ref{app:proof-aarc}.
\end{proof}

\section{Computational Experiments}\label{section:exp-results}

We evaluate the proposed framework through three computational studies. First, we investigate the value of incorporating HB operational flexibility through hot-idling and shutdown decisions. We assess both the impact of these features on optimal capacity planning and the effectiveness of warm-starting and bilinear cuts in improving MILO performance. Second, we compare the capacity decisions obtained from the proposed scenario-reduced robust formulations against those from a perfect-foresight benchmark. The resulting designs are then evaluated under out-of-sample operating conditions. Third, we study operational performance under forecast uncertainty. Using fixed capacities from the strategic model, we compare nominal, static robust, and adaptive robust formulations with respect to ammonia production, external grid dependence, and robustness to forecast errors.

\subsection{Impact of Hot-Idling and Shutdowns on Capacity Planning}\label{exp:mip}

Introducing hot-idling and shutdown decisions into the HB process transforms the strategic model into a MILO. This experiment investigates whether the additional complexity improves capacity-planning decisions. To isolate this effect, we consider a one-year perfect-foresight instance and minimize CAPEX only. We also evaluate the impact of warm-starting and the bilinear cuts developed in Section~\ref{section:mip}.
\subsubsection{Solver Performance with Warm Starts and Bilinear Cuts}

We consider three formulations: \textit{hot-idle} (shutdowns disabled, $\boldsymbol p=\boldsymbol r=\boldsymbol s=\boldsymbol 0$), \textit{shutdown} (hot-idling disabled, $\boldsymbol o=\boldsymbol 0$), and \textit{full} (both hot-idling and shutdowns enabled). For each formulation, we test progressively stronger families of bilinear cuts derived in Section~\ref{section:mip}: no cuts (baseline MILO), 1 cut (adds (2w')), 2 cuts (adds (2v')), and 3 cuts (adds (2x')). The maximum number of shutdowns per year is set to $N_{\mathrm{shutdown}}=10$, and the transition delay is $L = 24$ hours.

All models are solved using Gurobi 11.0.0 on the MIT SuperCloud, with 32 CPU threads, 192 GB RAM, and an Intel Xeon Platinum 8260 processor. The solver settings are a relative optimality gap of 0.01\% and a time limit of 72 hours.
A cold-start solve proved challenging: the naive MILO failed to find a feasible solution within 10.6 hours, and the optimality gap remained at 21.1\% after 72 hours. We therefore warm-start all subsequent runs using the feasible solution with all hot-idle and shutdown variables fixed to zero. This baseline solution has CAPEX of \$7.212B.

The principal finding is that hot-idling provides no measurable benefit for capacity planning under the base parameterization. In the hot-idle formulation, the warm-start solution was proven optimal, indicating that the model never selected hot-idle operation despite explicitly allowing it.

Table~\ref{tab:gap-cuts} and Figure~\ref{fig:hot-idle} summarize the effect of the bilinear cuts. For the hot-idle formulation, the cuts substantially tightened the root relaxation: the initial optimality gap fell from 21.2\% without cuts to 0.01\% with all three cuts. With all three cuts, optimality was proven at the root node. Without cuts, the model still converged, but required several hours of branch-and-bound. For the shutdown and full formulations, the impact of bilinear cuts was more mixed. Although the cuts improved the root relaxation, the resulting mixed-integer quadratic models became more difficult to solve. After 72 hours, the no-cut variants achieved tighter optimality gaps than the corresponding three-cut variants.

\begin{figure}[tbp]
    \centering
    \includegraphics[width=0.8\linewidth]{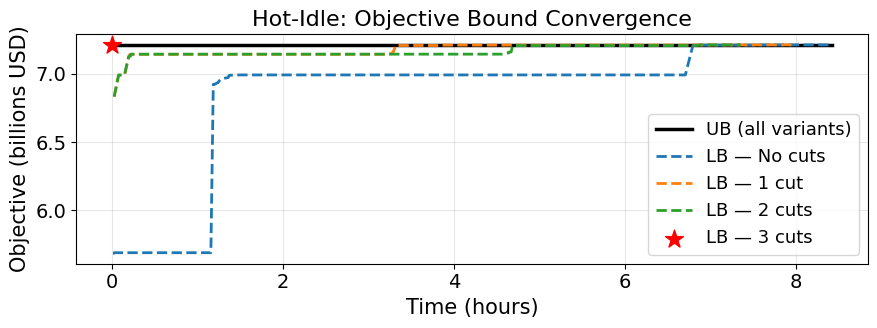}
    \caption{Solver progression for the hot-idle model, all with warm-starts. The warm-start baseline (blue) provides the initial upper bound, which was proved optimal. The no-cut variant converged in 6.8 hours, the 2-cut and 1-cut variants in 4.7 and 3.3 hours respectively, while the 3-cut variant proved optimality at the root node.}
    \label{fig:hot-idle}
\end{figure}

\begin{table}[tbp]
\centering
\caption{Optimality gaps at the root node and after extended solve times for varying numbers of cuts. Bold entries indicate the smallest gap within each model-time row.}
\label{tab:gap-cuts}
\begin{tabular}{llcccc}
\toprule
\textbf{Model} & \textbf{Time} 
& \textbf{No cuts} 
& \textbf{1 cut} 
& \textbf{2 cuts} 
& \textbf{3 cuts} \\
\midrule
Hot-idle    
    & Root node     & 21.20\% & 5.27\% & 5.27\% & \textbf{0.01\%} \\
    & 8 hours       & 0.01\%  & 0.01\% & 0.01\% & 0.01\% \\
\midrule
\multirow{3}{*}{Shutdown} 
    & Root node     & 21.10\% & 5.23\% & 5.23\% & \textbf{4.73\%} \\
    & 24 hours      & 4.07\% & 4.66\% & 4.94\% & \textbf{3.95\%} \\
    & 72 hours      & \textbf{2.56\%} & 4.45\% & 4.86\% & 3.95\% \\
\midrule
\multirow{3}{*}{Full} 
    & Root node     & 21.20\% & 5.27\% & 5.27\% & \textbf{5.25\%} \\
    & 24 hours      & \textbf{5.20\%} & 5.24\% & 5.24\% & 5.22\% \\
    & 72 hours      & \textbf{2.28\%} & 4.96\% & 5.14\% & 5.22\% \\
\bottomrule
\end{tabular}
\end{table}

Unlike hot-idling, shutdown flexibility produced a modest improvement. The best incumbent solution was obtained by the shutdown formulation with three cuts, achieving a CAPEX of \$7.159B, a 0.73\% reduction relative to the no-flexibility baseline. This solution retained a 3.95\% optimality gap after 72 hours, while the shutdown formulation without cuts produced a nearly identical incumbent and a smaller 2.56\% gap. The full formulation without cuts achieved a CAPEX of \$7.168B with a 2.28\% gap. We therefore interpret the results as evidence that shutdown flexibility provides some benefit, although its magnitude cannot be precisely quantified.

\subsubsection{Sensitivity Analysis of the No--Hot-Idling Solution}\label{sec:robustness}
We now examine the sensitivity of the no-hot-idling optimal solution to parameters that directly influence the value of operational flexibility. Specifically, we perturb the minimum HB load during normal operation, $\lambda_{HB}^{\min}$, the minimum hot-idle load, $\lambda_{HB}^{\mathrm{idle}}$, and the marginal CAPEX of hydrogen storage, $C_{H_2}$. Perturbations are applied only in directions favorable to hot-idling, namely increasing $\lambda_{HB}^{\min}$, decreasing $\lambda_{HB}^{\mathrm{idle}}$, and increasing $C_{H_2}$.

Table~\ref{tab:robustness_results} reports the resulting optimal CAPEX values. Changes to the hot-idle load and hydrogen-storage cost had no effect on the optimal solution. In contrast, increasing the minimum HB operating load created a modest benefit from operational flexibility, reducing CAPEX by 0.27\% at $\lambda_{HB}^{\min}=30\%$ and 3.25\% at $\lambda_{HB}^{\min}=60\%$.

Overall, the results suggest that hot-idling provides little value under the base parameterization and remains economically unattractive across a wide range of hydrogen-storage costs and hot-idle operating levels. Only when the minimum-load requirement becomes substantially more restrictive does operational flexibility begin to materially reduce system cost.

\begin{table}[tbp]
\centering
\caption{Sensitivity of optimal CAPEX to variations in the minimum HB load, hot-idle load, and hydrogen-storage cost.}
\label{tab:robustness_results}
\renewcommand{\arraystretch}{1.15}
\begin{tabular}{c l c c c c}
\toprule
\textbf{Parameter}
& \textbf{Change}
& \textbf{Value}
& $\boldsymbol{Z_0^\star}$
& $\boldsymbol{Z^\star}$
& $\boldsymbol{\frac{Z^\star-Z_0^\star}{Z_0^\star}}$ \\
\midrule
$\lambda_{\mathrm{HB}}^{\min}$
& Base       & $10\%$  & 7.212 & 7.212 & 0.00\% \\
& $+20\%$    & $30\%$  & 7.379 & 7.359 & $-0.27$\% \\
& $+40\%$    & $60\%$  & 7.669 & 7.420 & $-3.25$\% \\
\midrule
$\lambda_{\mathrm{HB}}^{\mathrm{idle}}$
& Base       & $75\%$  & 7.212 & 7.212 & 0.00\% \\
& $-20\%$    & $55\%$  & 7.212 & 7.212 & 0.00\% \\
& $-40\%$    & $35\%$  & 7.212 & 7.212 & 0.00\% \\
\midrule
$C_{H_2}$
& Base       & \$660/kg     & 7.212 & 7.212 & 0.00\% \\
& $\times 2$ & \$1{,}320/kg & 7.237 & 7.237 & 0.00\% \\
& $\times 5$ & \$3{,}300/kg & 7.312 & 7.312 & 0.00\% \\
\bottomrule
\end{tabular}
\end{table}\subsection{Strategic Model Performance under Perfect Forecasts}

This experiment evaluates whether the proposed scenario-reduced formulations produce effective capacity decisions. We compare the resulting designs against the full-horizon perfect-foresight benchmark and then assess their operational performance on unseen data using the rolling-horizon simulation model.

\subsubsection{Capacity Decisions}

To isolate the impact of scenario reduction, we assume perfect renewable forecasts and exclude the hot-idle and shutdown features of Section~\ref{section:mip}. For the scenario-reduced models, we select $n=5$ representative years, a parameter tuned to balance coverage and tractability.

\begin{table}[t]
\centering
\caption{Optimal capacity decisions across strategic model variants.}
\label{tab:strategic_comparison}
\begin{tabular}{lrrrr}
\toprule
\textbf{Component} & \textbf{Perfect-Foresight} & \multicolumn{3}{c}{\textbf{Scenario-Reduced}} \\
\cmidrule(lr){3-5}
 &  & \textbf{CLT} & \textbf{Budget} & \textbf{Aggregation} \\
\midrule
Solar [MW] & 1780 & 1676 & 1600 & 0 \\
Wind [MW] & 1926 & 2159 & 2224 & 2899 \\
Battery [MWh] & 2046 & 1395 & 716.4 & 407.1 \\
Hydrogen Storage [tons] & 80.69 & 31.64 & 24.36 & 33.37 \\
Desalination [$\mathrm{million\ m^3}$/year] & 6.320 & 6.398 & 6.321 & 7.447 \\
Electrolyzer [MW] & 1626 & 1667 & 1647 & 1771 \\
HB [MT NH$_3$/year] & 1.262 & 1.387 & 1.390 & 1.572 \\
\bottomrule
\end{tabular}
\end{table}
Table~\ref{tab:strategic_comparison} reports the optimal capacity decisions for each formulation. The LCOA objective values are omitted due to confidentiality restrictions. The CLT and Budget formulations produce qualitatively similar capacity plans with a diversified mix of wind, solar, and storage assets. In contrast, the constraint-aggregation formulation eliminates solar capacity entirely and relies almost exclusively on wind generation. By averaging renewable trajectories within clusters, the aggregation approach obscures prolonged low-generation events that are critical for satisfying the HB minimum-load requirements, leading to a markedly different capacity plan.
\subsubsection{Out-of-Sample Simulation}

To evaluate the capacity decisions, we simulate hourly operations over the 10-year test set using the rolling-horizon operational model. The prediction horizon and control horizon are set to 72 and 6 hours, respectively, and the storage reward coefficients are tuned to $(k_B,k_H)=(0.01,0.1)$.

The perfect-foresight, CLT, and Budget formulations remained feasible throughout the entire simulation horizon. Average ammonia production was 2876, 3052, and 3032 tons/day, respectively. In contrast, the constraint aggregation formulation violated the minimum-load requirements of the HB process and could not be operated feasibly, even under perfect forecasts. These results indicate that the proposed CLT and Budget formulations preserve sufficient temporal structure to produce feasible capacity plans, whereas simple constraint aggregation does not.

\subsection{Operational Model Performance under Forecast Error}

We compare the nominal, static-robust, and adaptive-robust operational models under forecast uncertainty. All simulations use the capacities obtained from the CLT strategic formulation (Table~\ref{tab:strategic_comparison}) and are performed over one year of unseen renewable data using a 12-hour prediction horizon and a 6-hour control horizon. Storage reward coefficients are tuned to $(k_B,k_H)=(0.001,0.01)$, reflecting the shorter 12-hour prediction horizon.

Forecasts are generated by perturbing the realized capacity factors with Gaussian noise:
\[
\overline{v}_{\texttt{source}}^{t}
=
v_{\texttt{source}}^{t}
-
u_{\texttt{source}}^{t},
\qquad
u_{\texttt{source}}^{t}
\sim
\mathcal N\!\left(0,(\sigma_{\texttt{source}}^t)^2\right).
\]
The forecast-error standard deviations are
\(
\sigma_s^t
=
\mathbf{1}{\{\overline v_s^t>0\}}\cdot\,0.02\sqrt t, \
\sigma_w^t
=
0.05\sqrt t.
\)
Thus, solar forecasts are deterministic at night, wind forecasts exhibit greater variability than solar forecasts, and uncertainty increases with forecast horizon.

Table~\ref{tab:simulation-operational} reports mean ammonia production and external grid usage to measure the trade-off between production and robustness. Additionally we report failure rates, which denote infeasibilities that cannot be recovered via the external grid intake. When a failure occurs, the corresponding control interval is skipped and the storage states are carried forward. Reported production and grid-usage statistics exclude failed intervals.

\begin{table}[tbp]
\centering
\caption{Operational simulation results for nominal, static-robust, and adaptive-robust models under varying robustness levels $\Omega$. Ammonia and grid usage are mean values over a one-year rolling horizon. Failures denote infeasibilities that cannot be recovered via external grid intake.}
\label{tab:simulation-operational}
\begin{tabular}{c c c c c c c}
\toprule
$\Omega$ & \multicolumn{2}{c}{\textbf{NH\textsubscript{3} Produced (tpd)}} & \multicolumn{2}{c}{\textbf{Grid Usage (MW)}} & \multicolumn{2}{c}{\textbf{Failure Rate (\%)}} \\
& Static & Adaptive & Static & Adaptive & Static & Adaptive \\
\midrule
0 (Nominal) & 2959 & 2959 & 59.25 & 59.25 & 0.0 & 0.0 \\
1           & 2744 & 2921 &  9.50 &  5.34 & 0.0 & 41.7 \\
2           & 2496 & 2854 &  2.49 & 18.81 & 0.0 &  8.7 \\
3           & 2237 & 2754 &  2.84 & 37.56 & 0.0 &  0.3 \\
\bottomrule
\end{tabular}
\end{table}

The nominal model ($\Omega=0$) achieves the highest ammonia production but relies heavily on external electricity, requiring an average of 59.3 MW of grid imports. Such dependence may compromise the economic and environmental benefits of green ammonia production. Increasing $\Omega$ reduces grid dependence at the expense of lower production. For example, the static-robust formulation with $\Omega=1$ reduces grid imports by 84\% while decreasing ammonia production by only 7.3\%. Across all robustness levels, the adaptive formulation consistently achieves higher production than the static formulation.

However, the adaptive formulation introduces a failure mode absent in the nominal and static formulations. In the nominal and static models, forecast errors affect only the energy-balance constraints \eqref{eq:operational-balance}, and renewable shortfalls can be recovered through additional grid imports. In contrast, affine decision rules make all flow variables depend on the uncertainty realization, causing nonnegativity, storage, and capacity constraints to depend on the forecast error as well. Consequently, realizations outside the uncertainty set may render the solution infeasible, and unlike \eqref{eq:operational-balance}, these violations cannot be repaired through additional grid intake. We refer to such events as \emph{brittle failures}. Recently, \citet{zhu_overfitting_2025} argued that this phenomenon is analogous to overfitting in machine learning, where a policy performs well within the assumed uncertainty model but generalizes poorly to out-of-set realizations. It is worth noting that the rolling-horizon framework already provides adaptivity through repeated forecast updates and re-optimization; thus, the gains reported here represent the incremental benefit of affine recourse beyond model predictive control. Nevertheless, brittleness can be substantially mitigated through uncertainty-set calibration. Increasing the robustness parameter reduces the failure rate from 41.7\% at $\Omega=1$ to 0.3\% at $\Omega=3$, while maintaining significantly higher production than the static formulation. Overall, the static formulation provides the most reliable operating policy, whereas the adaptive formulation offers higher production at the cost of greater probability of brittle failures.

\section{Conclusion}\label{section:conclusion}

We develop a robust optimization framework for green ammonia systems that integrates strategic capacity planning and operational dispatch under renewable uncertainty. At the strategic level, we formulate a capacity-planning model with flexible HB operation, including hot-idling and shutdown decisions, and develop an adversarial scenario-reduction framework for long-term renewable data. At the operational level, we formulate an adaptive robust rolling-horizon model under forecast uncertainty.

Computational results show that hot-idling provides limited value for strategic capacity planning under the considered parameters, while shutdown flexibility yields modest cost reductions. The proposed CLT and Budget scenario-reduction formulations produce feasible capacity plans under out-of-sample simulation, whereas constraint aggregation fails to do so. At the operational level, static-robust policies substantially reduce grid dependence, while adaptive policies achieve higher production but may exhibit brittle failures when forecast errors fall outside the uncertainty set.

\clearpage

\bibliographystyle{apalike}
\bibliography{references}
\clearpage

\appendix
\section*{Appendices}
\addcontentsline{toc}{section}{Appendices}

\section{Strategic Model}\label{app:strategic}
Table~\ref{tab:param-values} lists the parameter values used in the experiments.
\begin{table}[htbp]
\centering
\caption{Input parameters for the numerical experiments.}
\label{tab:param-values}
\small
\renewcommand{\arraystretch}{1.05}

\begin{tabular}{lll|lll}
\toprule
\multicolumn{3}{c|}{\textbf{Cost and storage parameters}}
&
\multicolumn{3}{c}{\textbf{Process and operating parameters}}
\\
\midrule
Symbol & Value & Units &
Symbol & Value & Units \\
\midrule

$C_s$ & 605,000 & \$/MW &
$\phi^e_{H_2^{\rm comp}}$ & 0.741 & ton/MWh \\

$C_w$ & 1,100,000 & \$/MW &
$\phi^e_{H_2}$ & 0.0192 & ton/MWh \\

$C_b$ & 275,000 & \$/MWh &
$\phi^e_{N_2}$ & 2.78 & ton/MWh \\

$C_e$ & 1,650,000 & \$/MW &
$\phi^e_{H_2O}$ & 285.7 & ton/MWh-m$^3$ \\

$C_{H_2}$ & 660,000 & \$/ton-H$_2$ &
$\phi^e_{NH_3}$ & 2.27 & ton/MWh \\

$C_{HB}$ & 816 & \$/ton-NH$_3$/yr &
$\phi^m_{H_2,H_2O}$ & 0.04 & -- \\

$C_d$ & 4 & \$/m$^3$/day &
$\phi^m_{NH_3,N_2}$ & 0.833 & -- \\

$C_{\rm trans}$ & 253 & M\$ &
$\phi^m_{NH_3,H_2}$ & 0.179 & -- \\

$C_{\rm port}$ & 200 & M\$ &

$\lambda_{HB}^{\min}$ & 10\% & of $K_{HB}$ \\

$\gamma$ & 6\% & /yr 
&$\lambda_{HB}^{e}$ & 60 & MWh / MT-NH$_3$ \\

$c_s$ & 1.5\% & of $C_s$ &
$\lambda_{HB}^{\rm idle}$ & 75\% & of $K_{HB}$ \\

$c_w$ & 2.5\% & of $C_w$ &
$\kappa$ & $10^6$ & ton NH$_3$ \\

$c_b$ & 4.2\% & of $C_b$ &
$\eta_{\rm RTE}$ & 85\% & -- \\

$c_e$ & 3.5\% & of $C_e$ &
$\eta_{\rm heat}$ & 99\% & -- \\

$c_{H_2}$ & 2\% & of $C_{H_2}$ &
$\Lambda_b$ & 20\% & of $K_b$/hr \\

$c_{HB}$ & 2\% & of $C_{HB}$ &
$\Lambda_{H_2}$ & 17.46 & ton/day \\

$S_{b,\rm init}$ & 50\% & of $K_b$ &
$\delta_s$ & 99.5\% & /yr \\

$S_{H_2,\rm init}$ & 50\% & of $K_{H_2}$ &
$\delta_w$ & 100\% & /yr \\

$S_{b,\max}$ & 90\% & of $K_b$ &
$\delta_b$ & 98\% & /yr \\

$S_{H_2,\max}$ & 90\% & of $K_{H_2}$ &
$\delta_e$ & 98.7\% & /yr \\

\bottomrule
\end{tabular}
\end{table}

\section{Operational Model}

\subsection{Probabilistic Guarantees}\label{app:prob-guarantee}
We now derive quantitative prescriptions for calibrating the robustness parameter~$\Omega$. For each constraint $l$,
\begin{equation}\label{eq:constraint-l}
\overline{\boldsymbol{a}}_l^\top \boldsymbol{K} 
+ \mathbf{1}_{\{l=\eqref{eq:operational-epigraph}\}} \psi 
+ \mathbf{1}_{\{l=(\eqref{eq:lp-model-energy-balance},t)\}} \boldsymbol{u}^\top \boldsymbol{K}_{\texttt{source}}^t 
+ \boldsymbol{d}_{x,l}^\top(\boldsymbol{z}_x + \boldsymbol{V}_x \boldsymbol{u}) 
+ \boldsymbol{d}_{w,l}^\top(\boldsymbol{z}_w + \boldsymbol{V}_w \boldsymbol{u}) 
\le b_l 
\end{equation}
with a fixed feasible solution to \eqref{eq:constraint-general}, we may require a guarantee
\begin{equation}\label{eq:prob-guarantee}
\mathbb{P}\left[\text{constraint $l$ satisfied}\right] \ge 1-\epsilon
\end{equation}
for some small $\epsilon > 0$.
In the robust optimization literature, probabilistic guarantees are well established for Gaussian perturbations and for distribution-free independent perturbations with bounded support in $[-\boldsymbol{1},\boldsymbol{1}]$. 
In our setting, the perturbations are bounded as 
$\boldsymbol{u}\in\boldsymbol{[L,U]}=[-\boldsymbol{\overline v},\,\boldsymbol{1}-\boldsymbol{\overline v}] 
\subseteq [-\boldsymbol{1},\boldsymbol{1}]$. 
We first restate the classical results as lemmas and then extend them as propositions to account for truncation and bounded supports in our model.

\paragraph{Gaussian case.}
We begin with the guarantee for Gaussian forecast errors, 
which relates the $(1-\epsilon)$ quantile of the multivariate normal 
to the corresponding ellipsoidal robust counterpart.
\begin{lemma}\label{lemma:gaussian}
    If $\boldsymbol{u}\sim\mathcal N(\boldsymbol{0},\Sigma)$, then 
\eqref{eq:prob-guarantee} holds if
\begin{equation}
\overline{\boldsymbol{a}}_l^\top \boldsymbol{K} 
+ \mathbf{1}_{\{l=\eqref{eq:operational-epigraph}\}} \psi 
+ \boldsymbol{d}_{x,l}^\top \boldsymbol{z}_x 
+ \boldsymbol{d}_{w,l}^\top \boldsymbol{z}_w  +
 \Omega_{1-\epsilon} \left\| \Sigma^{\frac12} \Big( 
   \mathbf{1}_{\{l=(\eqref{eq:lp-model-energy-balance},t)\}} \boldsymbol{K}_{\texttt{source}}^t  
   + V_x^\top \boldsymbol{d}_{x,l} 
   + V_w^\top \boldsymbol{d}_{w,l} 
\Big) \right\|_2 \le b_l , 
\end{equation}
where $\Omega_{1-\epsilon}$ is the $(1-\epsilon)$-quantile of the standard normal. 
Equivalently, this is the RC of \eqref{eq:constraint-general} with ellipsoidal uncertainty set
\[
\mathcal U_\text{ellipsoid}=\left\{\boldsymbol{u}:\ 
\bigl\|\Sigma^{-1/2}\boldsymbol{u}\bigr\|_2 \le \Omega_{1-\epsilon}\right\}.
\]
\end{lemma}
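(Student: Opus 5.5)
The plan is to rewrite constraint $l$ as a scalar affine function of $\boldsymbol u$ and use the fact that such a function of a Gaussian vector is itself univariate Gaussian. Fix a feasible $(\boldsymbol K,\psi,\boldsymbol z_x,\boldsymbol z_w,\boldsymbol V_x,\boldsymbol V_w)$. Define
\[
c_l := \overline{\boldsymbol a}_l^\top \boldsymbol K + \mathbf{1}_{\{l=\eqref{eq:operational-epigraph}\}}\psi + \boldsymbol d_{x,l}^\top\boldsymbol z_x + \boldsymbol d_{w,l}^\top\boldsymbol z_w,
\qquad
\boldsymbol g_l := \mathbf{1}_{\{l=(\eqref{eq:lp-model-energy-balance},t)\}}\boldsymbol K_{\texttt{source}}^t + \boldsymbol V_x^\top\boldsymbol d_{x,l} + \boldsymbol V_w^\top\boldsymbol d_{w,l}.
\]
Then constraint \eqref{eq:constraint-l} reads $c_l + \boldsymbol g_l^\top\boldsymbol u \le b_l$.

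Since $\boldsymbol u\sim\mathcal N(\boldsymbol 0,\Sigma)$, we have $\boldsymbol g_l^\top\boldsymbol u\sim\mathcal N(0,s_l^2)$ with $s_l^2=\boldsymbol g_l^\top\Sigma\boldsymbol g_l=\|\Sigma^{1/2}\boldsymbol g_l\|_2^2$. If $s_l>0$, we standardize:
\[
\mathbb P[c_l+\boldsymbol g_l^\top\boldsymbol u\le b_l]=\Phi\!\left(\frac{b_l-c_l}{s_l}\right).
\]
This is at least $1-\epsilon$ exactly when $b_l-c_l\ge \Omega_{1-\epsilon}s_l$, which is the stated condition. If $s_l=0$, then $\boldsymbol g_l^\top\boldsymbol u=0$ almost surely, and the condition reduces to $c_l\le b_l$, so the constraint holds with probability one. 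Hence the stated inequality implies \eqref{eq:prob-guarantee}.

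For the equivalence, compute the robust counterpart over $\mathcal U_{\text{ellipsoid}}$. Substitute $\boldsymbol u=\Sigma^{1/2}\boldsymbol\xi$ with $\|\boldsymbol\xi\|_2\le\Omega_{1-\epsilon}$. By Cauchy--Schwarz, with equality at the aligned $\boldsymbol\xi$,
\[
\max_{\boldsymbol u\in\mathcal U_{\text{ellipsoid}}}\boldsymbol g_l^\top\boldsymbol u=\Omega_{1-\epsilon}\|\Sigma^{1/2}\boldsymbol g_l\|_2 .
\]
If $\Sigma$ is only positive semidefinite, we interpret the set as $\{\Sigma^{1/2}\boldsymbol\xi:\|\boldsymbol\xi\|_2\le\Omega_{1-\epsilon}\}$, and the same formula still holds. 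This is the case that needs some care.

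The only subtlety is interpretive. The guarantee is per constraint: for fixed decision variables, each constraint is satisfied with probability at least $1-\epsilon$, not jointly across constraints. It also uses the untruncated Gaussian, ignoring the box in \eqref{eq:uset-capacity-bound}. Truncation is deferred to the subsequent propositions.
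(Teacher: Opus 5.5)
Your proof is correct and is the standard argument. The paper gives no proof of this lemma and presents it as a restated classical result; your two ingredients, the scalar reduction $\boldsymbol g_l^\top\boldsymbol u\sim\mathcal N(0,\|\Sigma^{1/2}\boldsymbol g_l\|_2^2)$ and the Cauchy--Schwarz support-function identity (which the paper itself uses in the proof of Proposition~\ref{prop:aarc}), are exactly what that classical result rests on. One caveat worth adding: the stated equivalence with the ellipsoidal robust counterpart tacitly requires $\epsilon\le 1/2$, so that $\Omega_{1-\epsilon}\ge 0$, which the paper leaves implicit by taking $\epsilon$ small.
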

We now extend the guarantee to the truncated case relevant to bounded capacity factors.
\begin{proposition}[Truncated Gaussian]\label{prop:trunc-gauss}
Let $\boldsymbol{u}\sim\mathcal N(\boldsymbol{0},\Sigma)$ and define 
$p_{B}=\mathbb{P}(\boldsymbol{u}\in[-\boldsymbol{\overline v},\,\boldsymbol{1}-\boldsymbol{\overline v}])>0$.
Then the AARC \eqref{eq:aarc} satisfies the probabilistic guarantee \eqref{eq:prob-guarantee}
if the ellipsoid–box uncertainty set \eqref{eq:operational-uset} is constructed with 
$\Omega=\Omega_{\,1-\epsilon p_{B}}$.
\end{proposition}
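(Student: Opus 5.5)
The plan is to reduce the truncated guarantee to the untruncated Gaussian guarantee of Lemma~\ref{lemma:gaussian}, paying for the truncation through a conditional-probability bound. Read the truncated model as $\tilde{\boldsymbol u} \sim \boldsymbol u \mid \{\boldsymbol u \in B\}$ with $B := [-\overline{\boldsymbol v},\,\boldsymbol 1-\overline{\boldsymbol v}]$. Constraint $l$ is affine in $\boldsymbol u$ once the decision variables are fixed. Define
\[
g(\boldsymbol u) := c_l + \boldsymbol h_l^\top \boldsymbol u,
\qquad
\boldsymbol h_l := \delta_l^e\boldsymbol K^t_{\mathrm{source}} + \boldsymbol V_x^\top \boldsymbol d_{x,l} + \boldsymbol V_w^\top \boldsymbol d_{w,l},
\]
where $c_l$ collects the deterministic terms. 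The goal is then $\mathbb P(g(\tilde{\boldsymbol u})\le b_l) \ge 1-\epsilon$.

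\textbf{Step 1 (conditioning bound).}
\[
\mathbb P\bigl(g(\boldsymbol u) > b_l \mid \boldsymbol u\in B\bigr)
= \frac{\mathbb P(g(\boldsymbol u)>b_l,\ \boldsymbol u\in B)}{p_B}
\le \frac{\mathbb P(g(\boldsymbol u)>b_l)}{p_B}.
\]
It therefore suffices that the untruncated violation probability is at most $\epsilon p_B$. Lemma~\ref{lemma:gaussian} with $\epsilon$ replaced by $\epsilon p_B$ shows this holds whenever
\[
c_l + \Omega_{1-\epsilon p_B}\,\|\Sigma^{1/2}\boldsymbol h_l\|_2 \le b_l,
\]
that is, whenever the constraint is robust against the pure ellipsoid $\mathcal U_{\mathrm{ellipsoid}}$ with radius $\Omega_{1-\epsilon p_B}$.

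\textbf{Step 2 (relating the AARC to this condition).} The AARC \eqref{eq:aarc} is the robust counterpart over the ellipsoid intersected with the box, which is a smaller set. Being robust against it is therefore weaker than being robust against the ellipsoid alone, so the sufficient condition of Step 1 does not follow directly. The fix is to split the event instead of dropping the box. Write
\[
\{g(\boldsymbol u)>b_l,\ \boldsymbol u\in B\}\subseteq \{\boldsymbol u\in B,\ \boldsymbol u\notin \mathcal E\},
\qquad
\mathcal E := \{\|\Sigma^{-1/2}\boldsymbol u\|_2\le\Omega\}.
\]
This inclusion holds because every point of $B\cap\mathcal E = \mathcal U$ satisfies the constraint, by robust feasibility from Proposition~\ref{prop:aarc}. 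It then remains to bound $\mathbb P(\boldsymbol u\in B\setminus\mathcal E)$.

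\textbf{Step 3 (main obstacle).} The bound $\mathbb P(\boldsymbol u\notin\mathcal E)$ is governed by a $\chi^2$ tail, not by the one-dimensional normal quantile, so this route is weaker than the proposition claims. I would therefore first try a half-space argument that keeps the univariate quantile. For the fixed $\boldsymbol h_l$, the worst case over $\mathcal U$ is attained at some $\boldsymbol u^*\in\mathcal U$. I would use the dual multipliers $\boldsymbol\alpha,\boldsymbol\beta$ from Proposition~\ref{prop:aarc} to write the AARC as
\[
c_l + (\boldsymbol\beta-\boldsymbol\alpha)\text{-box terms} + \Omega\,\|\Sigma^{1/2}\tilde{\boldsymbol h}_l\|_2 \le b_l,
\qquad
\tilde{\boldsymbol h}_l := \boldsymbol h_l-\boldsymbol\beta+\boldsymbol\alpha.
\]
On $B$, the box terms upper bound $(\boldsymbol h_l-\tilde{\boldsymbol h}_l)^\top\boldsymbol u$. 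Hence, on $B$,
\[
g(\boldsymbol u)>b_l \;\Rightarrow\; \tilde{\boldsymbol h}_l^\top\boldsymbol u > \Omega\,\|\Sigma^{1/2}\tilde{\boldsymbol h}_l\|_2 .
\]
Since $\tilde{\boldsymbol h}_l^\top\boldsymbol u$ is a univariate Gaussian, this event has probability at most $1-\Phi(\Omega)$. Combining with Step 1 and setting $\Omega=\Omega_{1-\epsilon p_B}$ gives violation probability at most $\epsilon p_B/p_B=\epsilon$.

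The delicate point is checking the sign conventions for $\boldsymbol L,\boldsymbol U$ in the inequality $\boldsymbol U^\top\boldsymbol\beta-\boldsymbol L^\top\boldsymbol\alpha \ge (\boldsymbol\beta-\boldsymbol\alpha)^\top\boldsymbol u$ for $\boldsymbol u\in B$. This follows from $\boldsymbol\alpha,\boldsymbol\beta\ge \boldsymbol 0$ and $\boldsymbol L\le\boldsymbol u\le\boldsymbol U$.
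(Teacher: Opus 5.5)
Your argument is correct, and at the decisive step it is more rigorous than the paper's. Both proofs use the same conditioning skeleton: the truncated violation probability equals $\mathbb P(\text{violation},\,\boldsymbol u\in B)/p_B$, so it suffices to make the numerator at most $\epsilon p_B$.

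The routes differ at the next step. The paper invokes Lemma~\ref{lemma:gaussian} to assert $\mathbb P(E)\ge 1-\epsilon p_B$ for the untruncated Gaussian. As you note in Step~2, that lemma concerns the pure ellipsoidal counterpart $c_l+\Omega\|\Sigma^{1/2}\boldsymbol h_l\|_2\le b_l$. This is only the special case $\boldsymbol\alpha=\boldsymbol\beta=\boldsymbol 0$ of \eqref{eq:aarc}, and an AARC-feasible solution need not satisfy it. The intermediate claim can in fact fail. Take one dimension with $\overline v=1/2$, $\Sigma=1$, $c_l=0$, $\boldsymbol h_l=1$. The choice $\beta=1,\ \alpha=0$ makes $b_l=1/2$ AARC-feasible for every $\Omega$, yet $\mathbb P(u\le 1/2)=\Phi(1/2)\approx 0.69$.

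Your Step~3 supplies what is missing. On $B$, the box terms $\boldsymbol U^\top\boldsymbol\beta-\boldsymbol L^\top\boldsymbol\alpha$ dominate $(\boldsymbol\beta-\boldsymbol\alpha)^\top\boldsymbol u$. So any violation inside $B$ forces $\tilde{\boldsymbol h}_l^\top\boldsymbol u>\Omega\|\Sigma^{1/2}\tilde{\boldsymbol h}_l\|_2$. This is a one-dimensional Gaussian tail event of probability at most $1-\Phi(\Omega)=\epsilon p_B$. In the degenerate case $\Sigma^{1/2}\tilde{\boldsymbol h}_l=\boldsymbol 0$ its probability is $0$. This bounds exactly the numerator that is needed, so your proof establishes the proposition for all AARC-feasible solutions. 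The paper's argument is valid only for those with $\boldsymbol\alpha=\boldsymbol\beta=\boldsymbol 0$. Your proof also needs only the inequality \eqref{eq:aarc} itself, not the exactness of the dualization in Proposition~\ref{prop:aarc}.

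When you write it up, drop the $\chi^2$ detour and the unused worst-case point $\boldsymbol u^*$. In Step~1, keep the intersection with $B$ in the numerator rather than passing to the unconditional $\mathbb P(g(\boldsymbol u)>b_l)$. Your Step~3 implication holds only on $B$, and the unconditional bound is precisely the one that can fail.
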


\begin{proof}
Let $E$ denote the event that constraint $l$ is satisfied under the untruncated Gaussian $\boldsymbol{u}$, and let $B=\{\boldsymbol{u}\in[-\boldsymbol{\overline v},\,\boldsymbol{1}-\boldsymbol{\overline v}]\}$. The probability of constraint $l$ being satisfied under the truncated $\boldsymbol{u}$ is
\[
\mathbb{P}(E\mid B)
=\frac{\mathbb{P}(E\cap B)}{\mathbb{P}(B)}=\frac{\mathbb{P}(E) -\mathbb{P}(E\cap B^{\mathrm c})}{\mathbb{P}(B)}
\;\ge\;\frac{\mathbb{P}(E)-\mathbb{P}(B^{\mathrm c})}{\mathbb{P}(B)}.
\]
By Lemma~\ref{lemma:gaussian}, taking
$\Omega=\Omega_{1-\epsilon p_B}$ ensures $\mathbb{P}(E)\ge 1-\epsilon p_B$. Since
$\mathbb{P}(B^{\mathrm c})=1-p_B$, we get
\[
\mathbb{P}(E\mid B)
\;\ge\;\frac{1-\epsilon p_B-(1-p_B)}{p_B}
\;=\;1-\epsilon. \qedhere
\]
\end{proof}
\paragraph{Distribution-free case.}
When the Gaussian assumption may not hold, a looser but fully distribution-free sub-Gaussian bound can be established. 
This result assumes that perturbations are independent across time and source.
Such an approximation is reasonable if the forecasting model $\overline{\boldsymbol v}$ 
already captures temporal and cross-source correlations, leaving only independent residual noise.
\begin{lemma}\label{lemma:dist-free}
If $\boldsymbol{u}$ are independent, zero-mean, with support in $\boldsymbol{[-1,1]}$, then the AARC \eqref{eq:aarc} with $\boldsymbol{L} = \boldsymbol{-1},\; \boldsymbol{U} = \boldsymbol{1}$ ensures
\[\mathbb P[\text{constraint $l$ satisfied}]
\ge 1-\exp(-\Omega^2/2).\] Choosing $\Omega=\sqrt{2\ln(1/\epsilon)}$ yields 
\eqref{eq:prob-guarantee}.
\end{lemma}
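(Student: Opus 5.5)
The plan is to reduce the statement to Hoeffding's inequality for a linear form in independent bounded variables. Fix a feasible solution $(\boldsymbol z_x,\boldsymbol z_w,\boldsymbol V_x,\boldsymbol V_w,\psi,\boldsymbol\alpha,\boldsymbol\beta)$ of the AARC \eqref{eq:aarc} with $\boldsymbol L=-\boldsymbol 1$, $\boldsymbol U=\boldsymbol 1$. Write the uncertain part of constraint \eqref{eq:constraint-l} as $\boldsymbol g^\top\boldsymbol u$, where
\[
\boldsymbol g=\mathbf 1_{\{l=(\eqref{eq:lp-model-energy-balance},t)\}}\boldsymbol K_{\texttt{source}}^t+V_x^\top\boldsymbol d_{x,l}+V_w^\top\boldsymbol d_{w,l},
\]
and let $c$ collect the deterministic terms, so that the constraint reads $c+\boldsymbol g^\top\boldsymbol u\le b_l$. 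Since $\boldsymbol L^\top\boldsymbol\alpha$ enters with a minus sign, \eqref{eq:aarc} gives
\[
c+\boldsymbol 1^\top(\boldsymbol\alpha+\boldsymbol\beta)+\Omega\|\Sigma^{1/2}(\boldsymbol g-\boldsymbol\beta+\boldsymbol\alpha)\|_2\le b_l .
\]
Here I take $\Sigma=I$, as is natural for the independent, distribution-free setting with unit-scale box support. Otherwise the scaling $\Sigma^{1/2}$ would be absorbed by rescaling $\boldsymbol u$, and it must be checked that the rescaled variables still lie in $[-1,1]$.

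Next, split the random term. Set $\boldsymbol h=\boldsymbol g-\boldsymbol\beta+\boldsymbol\alpha$, so that $\boldsymbol g^\top\boldsymbol u=\boldsymbol h^\top\boldsymbol u+(\boldsymbol\beta-\boldsymbol\alpha)^\top\boldsymbol u$. Because $|u_i|\le 1$ and $\boldsymbol\alpha,\boldsymbol\beta\ge\boldsymbol 0$, the second piece satisfies
\[
(\boldsymbol\beta-\boldsymbol\alpha)^\top\boldsymbol u\le\sum_i|\beta_i-\alpha_i|\le\boldsymbol 1^\top(\boldsymbol\alpha+\boldsymbol\beta)
\]
deterministically. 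Hence a violation $c+\boldsymbol g^\top\boldsymbol u>b_l$ forces $\boldsymbol h^\top\boldsymbol u>\Omega\|\boldsymbol h\|_2$.

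The probabilistic step applies Hoeffding's inequality. The summands $h_iu_i$ are independent, have zero mean, and take values in $[-|h_i|,|h_i|]$, so
\[
\mathbb P(\boldsymbol h^\top\boldsymbol u>\Omega\|\boldsymbol h\|_2)\le\exp\!\Big(-\frac{2\Omega^2\|\boldsymbol h\|_2^2}{\sum_i(2|h_i|)^2}\Big)=\exp(-\Omega^2/2).
\]
This gives the claimed bound. If $\boldsymbol h=\boldsymbol 0$, the violation event is empty and the bound holds trivially. Setting $\exp(-\Omega^2/2)=\epsilon$ yields $\Omega=\sqrt{2\ln(1/\epsilon)}$.

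I expect the main obstacle to be the bookkeeping in the first step, not the concentration bound. One must correctly identify the affine coefficient vector of $\boldsymbol u$, including the case where constraint $l$ is the epigraph constraint and $\boldsymbol u$ enters through the objective. One must also verify that the sign conventions for $\boldsymbol\alpha,\boldsymbol\beta$ in \eqref{eq:aarc} match the dual decomposition with box support $[-\boldsymbol 1,\boldsymbol 1]$, so that the deterministic bound on $(\boldsymbol\beta-\boldsymbol\alpha)^\top\boldsymbol u$ is valid. If the signs turn out to be reversed, the same argument goes through after swapping the roles of $\boldsymbol\alpha$ and $\boldsymbol\beta$.
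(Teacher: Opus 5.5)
Your proof is correct. The paper gives no proof of this lemma; it explicitly ``restates'' it as a classical result. Your argument is precisely the standard ball--box proof behind that result: split $\boldsymbol g=(\boldsymbol\beta-\boldsymbol\alpha)+\boldsymbol h$, bound the box part deterministically by $\boldsymbol 1^\top(\boldsymbol\alpha+\boldsymbol\beta)$, and apply Hoeffding to $\boldsymbol h^\top\boldsymbol u$. It also only uses the easy direction, since any feasible $\boldsymbol\alpha,\boldsymbol\beta$ certifies the bound, so it does not depend on the duality equivalence claimed in Proposition~\ref{prop:aarc}.

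One thing you should settle rather than leave as a caveat is $\Sigma$. Reading $\Sigma=I$ is not merely natural but necessary. The bound still holds for any $\Sigma\succeq I$, since then $\|\Sigma^{1/2}\boldsymbol h\|_2\ge\|\boldsymbol h\|_2$. But if $\Sigma$ is the actual covariance of $\boldsymbol u$, as in the main text, the rescaling you sketch pushes the support outside $[-1,1]$, and the lemma is false. Take a scalar $u$ equal to $\pm1$ with probability $0.05$ each and $0$ otherwise, so $\Sigma=0.1$. With $\Omega=3$, $g=1$, $\boldsymbol\alpha=\boldsymbol\beta=\boldsymbol 0$ and $b_l=c+3\sqrt{0.1}\approx c+0.95$, the AARC holds, yet the constraint fails with probability $0.05>e^{-4.5}\approx 0.011$.
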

Again, we extend the guarantee to the bounded support case relevant to capacity factors.
\begin{proposition}[Distribution-free sub-Gaussian bound under bounded independence]\label{prop:distfree}
Suppose $\boldsymbol{u}$ is independent, zero-mean, with support $\boldsymbol{[L,U] = [-\boldsymbol{\overline{v}},1-\boldsymbol{\overline{v}}]} \subseteq \boldsymbol{[-1,1]}$, where $\boldsymbol{\overline{v}}$ are the given point forecasts. Then the AARC \eqref{eq:aarc} satisfies guarantee \eqref{eq:prob-guarantee} if we set the ellipsoid-box uncertainty set \eqref{eq:operational-uset} with $\Omega=\sqrt{2\ln(1/\epsilon)}$.
\end{proposition}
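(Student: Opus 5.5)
The plan is to reduce Proposition~\ref{prop:distfree} to Lemma~\ref{lemma:dist-free}, plus one comparison between the AARC built on the box $[\boldsymbol L,\boldsymbol U]$ and the one built on $[-\boldsymbol 1,\boldsymbol 1]$. We fix a feasible solution $(\boldsymbol z_x,\boldsymbol z_w,\boldsymbol V_x,\boldsymbol V_w,\psi,\boldsymbol\alpha,\boldsymbol\beta)$ of \eqref{eq:aarc} and a constraint $l$. Write the uncertain part of \eqref{eq:constraint-l} as $\boldsymbol y^\top\boldsymbol u$, with $\boldsymbol y:=\delta_l^e\boldsymbol K^t_{\mathrm{source}}+\boldsymbol V_x^\top\boldsymbol d_{x,l}+\boldsymbol V_w^\top\boldsymbol d_{w,l}$, and the certain part as $a_0:=\overline{\boldsymbol a}_l^\top\boldsymbol K+\delta_l^\psi\psi+\boldsymbol d_{x,l}^\top\boldsymbol z_x+\boldsymbol d_{w,l}^\top\boldsymbol z_w$. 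With this notation, constraint $l$ is violated exactly when $\boldsymbol y^\top\boldsymbol u>b_l-a_0$.

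The first step uses the split $\boldsymbol y=(\boldsymbol y-\boldsymbol\beta+\boldsymbol\alpha)+(\boldsymbol\beta-\boldsymbol\alpha)$ appearing in \eqref{eq:aarc}. Since $\boldsymbol L\le\boldsymbol u\le\boldsymbol U$ almost surely and $\boldsymbol\alpha,\boldsymbol\beta\ge\boldsymbol 0$, we have $(\boldsymbol\beta-\boldsymbol\alpha)^\top\boldsymbol u\le\boldsymbol U^\top\boldsymbol\beta-\boldsymbol L^\top\boldsymbol\alpha$ deterministically. Feasibility in \eqref{eq:aarc} then gives
\[
\mathbb P[\text{violation}]\le \mathbb P\bigl[\boldsymbol g^\top\boldsymbol u>\Omega\|\Sigma^{1/2}\boldsymbol g\|_2\bigr],\qquad \boldsymbol g:=\boldsymbol y-\boldsymbol\beta+\boldsymbol\alpha .
\]
This removes the box terms exactly, and it is the same device used in the proof of Lemma~\ref{lemma:dist-free}.

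The second step bounds the remaining tail. The $u_i$ are independent and zero-mean with $u_i\in[L_i,U_i]$, an interval of length $1$. Hoeffding's lemma gives $\mathbb E e^{\lambda g_iu_i}\le e^{\lambda^2g_i^2(U_i-L_i)^2/8}=e^{\lambda^2g_i^2/8}$. Chernoff's bound then yields
\[
\mathbb P[\boldsymbol g^\top\boldsymbol u>\tau]\le\exp\bigl(-2\tau^2/\|\boldsymbol g\|_2^2\bigr),
\]
which is at least as strong as the $[-1,1]$ bound $\exp(-\tau^2/(2\|\boldsymbol g\|_2^2))$ behind Lemma~\ref{lemma:dist-free}. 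In other words, the bounded support $[\boldsymbol L,\boldsymbol U]\subseteq[-\boldsymbol 1,\boldsymbol 1]$ only shrinks the sub-Gaussian proxy, so the argument of Lemma~\ref{lemma:dist-free} carries over verbatim with $\boldsymbol L,\boldsymbol U$ in place of $\mp\boldsymbol 1$. Substituting $\tau=\Omega\|\Sigma^{1/2}\boldsymbol g\|_2$ and $\Omega=\sqrt{2\ln(1/\epsilon)}$ should give violation probability at most $\epsilon$.

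The main obstacle is reconciling the scale $\|\Sigma^{1/2}\boldsymbol g\|_2$ in the robust counterpart with the scale $\|\boldsymbol g\|_2$ that Hoeffding naturally produces. Lemma~\ref{lemma:dist-free} is stated in the normalization where these coincide, i.e.\ $\Sigma=I$. For general diagonal $\Sigma$ (diagonal by independence), I would first try the variance-based sub-Gaussian bound: a zero-mean variable on an interval of length at most $1$ is sub-Gaussian with proxy $\sigma_i^2$ in the regime used here. Failing that, I would work in the rescaled variables $\Sigma^{-1/2}\boldsymbol u$ and state explicitly that the proof inherits the same normalization convention as Lemma~\ref{lemma:dist-free}. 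Either way, the conclusion is that the guarantee of Lemma~\ref{lemma:dist-free} transfers unchanged once the box in \eqref{eq:operational-uset} is $[\boldsymbol L,\boldsymbol U]$ rather than $[-\boldsymbol 1,\boldsymbol 1]$.
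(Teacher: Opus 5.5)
Your route is the paper's route, spelled out. The paper's proof just invokes Lemma~\ref{lemma:dist-free} and remarks that shrinking the box to $[\boldsymbol L,\boldsymbol U]$ costs nothing because $\boldsymbol u\in[\boldsymbol L,\boldsymbol U]$ almost surely. Your first step, the almost-sure bound $(\boldsymbol\beta-\boldsymbol\alpha)^\top\boldsymbol u\le\boldsymbol U^\top\boldsymbol\beta-\boldsymbol L^\top\boldsymbol\alpha$, is what makes that remark rigorous. A point feasible for \eqref{eq:aarc} with the smaller box need not be feasible for the version with $[-\boldsymbol 1,\boldsymbol 1]$, so the lemma cannot be applied to it verbatim. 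For the remaining tail you do not need Hoeffding. With $\boldsymbol\alpha=\boldsymbol\beta=\boldsymbol 0$ and the constraint tight, Lemma~\ref{lemma:dist-free} specializes to $\mathbb P[\boldsymbol g^\top\boldsymbol u>\Omega\|\Sigma^{1/2}\boldsymbol g\|_2]\le e^{-\Omega^2/2}$ for every $\boldsymbol g$ and every independent zero-mean $\boldsymbol u$ supported in $[-\boldsymbol 1,\boldsymbol 1]$, which your $\boldsymbol u$ is. That is how the paper finishes, but it only moves your ``main obstacle'' into the lemma.

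You were right to flag that obstacle, and it is genuine. Your first fallback fails, because a zero-mean variable on an interval of length $1$ is not in general sub-Gaussian with proxy equal to its variance. Take one coordinate with $\overline v=p$, and let $u=1-p$ with probability $p$ and $u=-p$ otherwise. This has zero mean, support in $[-p,1-p]=[L,U]$, and variance $p(1-p)$. Let $\boldsymbol g$ be the corresponding unit vector, set $\boldsymbol\alpha=\boldsymbol\beta=\boldsymbol 0$, and make \eqref{eq:aarc} tight. The constraint is violated iff $u>\Omega\sqrt{p(1-p)}$. For $p=0.01$ and $\Omega=4$ the threshold is about $0.40<0.99$, so the violation probability is $0.01$, far above $\epsilon=e^{-8}\approx3.4\times10^{-4}$. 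Rescaling to $\Sigma^{-1/2}\boldsymbol u$ does not help either: the rescaled coordinates have supports of length $1/\sigma_i\ge2$, which generally no longer lie in $[-1,1]$. So with $\Sigma$ the true covariance, the proposition (and Lemma~\ref{lemma:dist-free}, on which the paper's proof rests) is false as stated.

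What your Hoeffding step does prove is the statement for any $\Sigma\succeq\tfrac14 I$, in particular the classical ball--box normalization $\Sigma=I$. In that case $\mathbb P[\boldsymbol g^\top\boldsymbol u>\Omega\|\Sigma^{1/2}\boldsymbol g\|_2]\le\exp(-2\Omega^2\,\boldsymbol g^\top\Sigma\boldsymbol g/\|\boldsymbol g\|_2^2)\le e^{-\Omega^2/2}=\epsilon$. Every variance here is at most $\tfrac14$, so this condition can essentially never be satisfied by the actual covariance of such $\boldsymbol u$; it is a normalization. You should state it as an explicit hypothesis rather than hedge between fallbacks.
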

\begin{proof}
Lemma~\ref{lemma:dist-free} provides us the guarantee for support in  $\boldsymbol{[-1,1]}$. Shrinking the box \eqref{eq:uset-capacity-bound} from $ \boldsymbol{[-1,1]}$ to $[-\boldsymbol{\overline{v}},\boldsymbol{1-\overline{v}}]$ does not decrease the  $1-\epsilon$ guarantee since  $\boldsymbol{u \in [-\boldsymbol{\overline{v}},1-\boldsymbol{\overline{v}}]}$ almost surely.
\end{proof}

\subsection{Proof of Proposition~\ref{prop:aarc} (AARC derivation)}\label{app:proof-aarc}
\begin{proof}
For a fixed constraint $l$, define
\[
c_l
=
\overline{\boldsymbol a}_l^\top \boldsymbol K
+
\delta_l^\psi \psi
+
\boldsymbol d_{x,l}^\top \boldsymbol z_x
+
\boldsymbol d_{w,l}^\top \boldsymbol z_w
, \quad
\boldsymbol q_l
=
\delta_l^e \boldsymbol K_{\mathrm{source}}^t
+
\boldsymbol V_x^\top\boldsymbol d_{x,l}
+
\boldsymbol V_w^\top\boldsymbol d_{w,l}.
\]
Then \eqref{eq:constraint-general} can be written as
\[
c_l+\boldsymbol q_l^\top\boldsymbol u\le b_l,
\qquad
\forall \boldsymbol u\in\mathcal U.
\]
Equivalently,
\[
c_l+
\max_{\boldsymbol u}
\left\{
\boldsymbol q_l^\top\boldsymbol u:
\boldsymbol L\le \boldsymbol u\le \boldsymbol U,\;
\|\Sigma^{-1/2}\boldsymbol u\|_2\le \Omega
\right\}
\le b_l.
\]

For any $\boldsymbol\alpha,\boldsymbol\beta\ge\boldsymbol 0$ satisfying the
box dualization, we have
\[
\boldsymbol q_l^\top\boldsymbol u
\le
\boldsymbol U^\top\boldsymbol\beta
-
\boldsymbol L^\top\boldsymbol\alpha
+
(\boldsymbol q_l-\boldsymbol\beta+\boldsymbol\alpha)^\top\boldsymbol u.
\]
Maximizing the remaining linear term over the ellipsoid gives
\[
\max_{\|\Sigma^{-1/2}\boldsymbol u\|_2\le \Omega}
(\boldsymbol q_l-\boldsymbol\beta+\boldsymbol\alpha)^\top\boldsymbol u
=
\Omega
\left\|
\Sigma^{1/2}
(\boldsymbol q_l-\boldsymbol\beta+\boldsymbol\alpha)
\right\|_2.
\]
Therefore, the robust constraint is equivalent to the existence of
$\boldsymbol\alpha,\boldsymbol\beta\ge\boldsymbol 0$ such that
\[
c_l
+
\boldsymbol U^\top\boldsymbol\beta
-
\boldsymbol L^\top\boldsymbol\alpha
+
\Omega
\left\|
\Sigma^{1/2}
(\boldsymbol q_l-\boldsymbol\beta+\boldsymbol\alpha)
\right\|_2
\le b_l.
\]
Substituting the definitions of $c_l$ and $\boldsymbol q_l$ yields
\eqref{eq:aarc}.
\end{proof}

\end{document}